\documentclass{amsart}
\usepackage{amssymb,amsthm, amsmath, amsfonts}
\usepackage{graphics}
\usepackage{hyperref}
\usepackage[all]{xy}
\usepackage{enumerate}
\usepackage[mathscr]{eucal}
\usepackage{bbm}

\setlength{\oddsidemargin}{0.2in}
\setlength{\evensidemargin}{0.2in}
\setlength{\textwidth}{6.1in}

\theoremstyle{plain}
\newtheorem{theorem}{Theorem}[section]
\newtheorem{lemma}[theorem]{Lemma}
\newtheorem{proposition}[theorem]{Proposition}

\newtheorem{corollary}[theorem]{Corollary}
\numberwithin{equation}{section}

\theoremstyle{definition}

\newtheorem{definition}[theorem]{Definition}
\newtheorem{example}[theorem]{Example}
\newtheorem{remark}[theorem]{Remark}

\setcounter{tocdepth}{1}

\DeclareMathOperator{\Ob}{Ob}
\DeclareMathOperator{\Module}{-Mod}
\DeclareMathOperator{\lfmod}{-lfmod}
\DeclareMathOperator{\fgmod}{-fgmod}
\DeclareMathOperator{\sfpmod}{-sfpmod}
\DeclareMathOperator{\Hom}{Hom}
\DeclareMathOperator{\Tor}{Tor}
\DeclareMathOperator{\hd}{hd}
\DeclareMathOperator{\gd}{gd}
\DeclareMathOperator{\td}{td}

\DeclareMathOperator{\supp}{supp}
\DeclareMathOperator{\ini}{ini}

\newcommand{\mk}{{\mathbbm{k}}}
\newcommand{\C}{{\mathscr{C}}}
\newcommand{\tC}{{\underline{\mathscr{C}}}}
\newcommand{\Ai}{{\mathsf{A}_{\infty}}}
\newcommand{\Z}{{\mathbb{Z}_+}}
\newcommand{\So}{{S_1}}
\newcommand{\Sa}{{S_a}}
\newcommand{\FI}{{\mathscr{FI}}}
\newcommand{\VI}{{\mathscr{VI_q}}}
\newcommand{\FS}{{\mathscr{FS}}}
\newcommand{\OS}{{\mathscr{OS}}}
\newcommand{\OI}{{\mathscr{OI}}}
\newcommand{\FPi}{{\mathrm{FP}_{\infty}}}
\DeclareMathOperator{\Fq}{\mathbb{F}}

\title[Homological degrees of representations of categories]{Homological degrees of representations of categories with shift functors}
\author{Liping Li}
\address{College of Mathematics and Computer Science, Performance Computing and Stochastic Information Processing (Ministry of Education of China), Hunan Normal University, Changsha, Hunan 410081, P. R. China}
\email{lipingli@hunnu.edu.cn; lixxx480@umn.edu.}
\thanks{This project was supported by the Construct Program of the Key Discipline in Hunan Province.}
\thanks{The motivation of this project originated from numerous discussions with Wee Liang Gan at University of California, Riverside, when the author was a visiting assistant professor there. These discussions motivated the author to study homologies of combinatorial categories in representations stability using the ideas and techniques described in \cite{GL2}. The author would like to thank him for the extremely inspiring communication.}

\begin{document}

\begin{abstract}
Let $\mk$ be a commutative Noetherian ring and $\tC$ be a locally finite $\mk$-linear category equipped with a self-embedding functor of degree 1. We show under a moderate condition that finitely generated torsion representations of $\tC$ are super finitely presented (that is, they have projective resolutions each term of which is finitely generated). In the situation that these self-embedding functors are genetic functors, we give upper bounds for homological degrees of finitely generated torsion modules. These results apply to quite a few categories recently appearing in representation stability theory. In particular, when $\mk$ is a field of characteristic 0, using the result of \cite{CE}, we obtain another upper bound for homological degrees of finitely generated $\FI$-modules.
\end{abstract}

\maketitle

\tableofcontents

\section{Introduction}

\subsection{Background}
Recently, a few combinatorial categories appeared in representation stability theory, a new exciting research area involving many mathematical branches such as representation theory, group cohomology, algebraic topolgy, algebraic geometry, algebraic number theory, commutative algebra, combinatorics, etc. Examples include $\widetilde{\FI}$ \footnote{By $\widetilde{\FI}$ we denote the category of all finite sets and injections, and by $\FI$ we denote a skeletal category of $\widetilde{\FI}$, which only contains objects $[i] = \{1, 2, \ldots, i \}$. By convention, $[0] = \emptyset$.} , the category of finite sets and injections investigated by Church, Ellenberg, Farb, and Nagpal in a series of papers \cite{CEF, CEFN, CE, Farb, N}, and its many variations introduced by Putman, Sam, Snowden, and Wilson in \cite{PS, SS1, SS2, Wilson}. Representation theory of these categories, on one hand was used to prove different stability phenomena as shown in \cite{CF, CEF, CEFN, P, PS}; on the other hand was studied in its own right since these categories have very interesting combinatorial structures, which often induce surprising representational and homological properties. For example, when $\mk$ is a commutative Noetherian ring, the $\mk$-linearizations of many categories are locally Noetherian (\cite{CEF, CEFN, GL1, SS2}); that is, submodules of finitely generated modules are still finitely generated.

A central property shared by many of these combinatorial categories is the existence of a faithful endofunctor called \emph{self-embedding} functor by us; see Definition \ref{self-embedding functors}. It induces a \emph{shift functor} $\Sa$ of degree $a$ in the module categories for every $a \in \Z$, the set of nonnegative integers; see Definition \ref{shift functor}. These self-embedding functors and their induced shift functors have nice properties such as preserving finitely generated projective representations. and hence play an extremely important role in exploring representational and homological properties of these categories. For instances, they were firstly observed in \cite{CEFN} and used to prove the locally Noetherian property of the category $\widetilde{\FI}$ over Noetherian rings $\mk$. When $\mk$ is a field of characteristic 0, Gan and the author used them in \cite{GL2} to show the Koszulity of $\mk$-linearizations of quite a few combinatorial categories simultaneously. In \cite{GL3} we introduced \emph{coinduction functors}, which are right adjoints of shift functors, and gave new and simpler proofs for many results of $\FI$ established in \cite{CEF} and \cite{SS1}. A few months ago, Church and Ellenberg used shift functors to study homologies of $\FI$-modules and proved a surprising upper bound for homological degrees of them (\cite[Theorem A]{CE}).

Motivated by the work of Church and Ellenberg in \cite{CE}, in this paper we focus on homologies of representations of $\mk$-linear categories $\tC$ equipped with self-embedding functors and induced shift functors, where $\mk$ is a commutative ring. Note that $\tC$ in general might not be locally Noetherian. Thus from the homological viewpoint we are more interested in \emph{super finitely presented representations}, or \emph{$\FPi$ representations}, of $\tC$, which by definition have resolutions consisting of finitely generated projective representations. Specifically, we want to know what representations are $\FPi$, explore homologies of these $\FPi$ representations, and estimate upper bounds for their homological degrees.

\subsection{General results}

Let $\mk$ be a commutative Noetherian ring, and let $\tC$ be a \emph{locally finite $\mk$-linear category of type $\Ai$}. That is, objects of $\tC$ are parameterized by nonnegative integers; there is no nonzero morphisms from bigger objects to smaller ones; and $\tC (i, j)$ is a finitely generated $\mk$-module for all $i, j \in \Z$.

By the $\Ai$ structure, $\tC$ has a two-sided ideal $J$ consisting of finite linear combinations of morphisms between distinct objects. Thus we identify $\tC_0$, the set of finite linear combinations of endomorphisms in $\tC$, with $\tC /J$. It has the following decomposition as $\tC$-modules
\begin{equation*}
  \tC_0 = \bigoplus _{i \in \Z} \tC(i,i).
\end{equation*}

Given a $\tC$-module $V$, its $s$-th homology is set to be
\begin{equation*}
H_s(V) = \Tor _s^{\tC} (\tC_0, V), \quad s \in \Z.
\end{equation*}
Since $\tC_0 = \tC / J$ is a $\tC$-bimodule, $H_s(V)$ is a left $\tC$-module as well, and is \textit{torsion} (see Definition \ref{torsion modules}). Moreover, it is \emph{discrete}; that is, the value of $H_s (V)$ on each object is a $\tC$-module concentrated on this object, and $H_s (V)$ is the direct sum of them.

The $s$-th \emph{homological degree} of $V$ is defined to be
\begin{equation*}
\hd_s (V) = \sup \{ i \in \Z \mid \text{the value of $H_s(V)$ on $i$ is nonzero} \}
\end{equation*}
or $-\infty$ if this set is empty. We also define the \emph{torsion degree} of $V$ to be
\begin{equation*}
\td(V) = \sup \{ i \in \Z \mid \Hom_{\tC} (\tC (i, i), V) \neq 0 \}
\end{equation*}
or $-\infty$ if this set is empty. Sometimes we call $\hd_0 (V)$ the \emph{generating degree} and denote it by $\gd(V)$.

\begin{remark} \normalfont
Torsion degrees and homological degrees are closely related. Actually, for $s \geqslant 0$, one can see $\hd_s(V) = \td (H_s (V))$. It is also clear that if $V$ is generated by $\bigoplus _{i=0}^n V_i$, then $n \geqslant \gd(V)$.
\end{remark}

To avoid the situation that certain homological degrees of a $\tC$-module $V$ are infinity, in this paper we mainly consider $\FPi$ modules $V$. It turns out many interesting $\tC$-modules fall into this class when $\tC$ has a self-embedding functor satisfying some assumption.

\begin{theorem} \label{first main result}
Let $\mk$ be a commutative Noetherian ring and $\tC$ be a locally finite $\mk$-linear category of type $\Ai$ equipped with a self-embedding functor $\iota$ of degree 1. Let $\So$ be the induced shift functor. If $\So$ preserves finitely generated projective $\tC$-modules , then a $\tC$-module $V$ is $\FPi$ if and only if so is $\So V$. In particular, every finitely generated torsion module is $\FPi$.
\end{theorem}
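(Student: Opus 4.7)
The plan is to verify both directions of the equivalence separately, then derive the torsion statement by descending induction. The forward direction is the clean formal part: the self-embedding $\iota$ is faithful, so the shift functor $\So$, realized as restriction along $\iota$, is exact; by hypothesis it also preserves finitely generated projectives. Hence applying $\So$ termwise to any resolution of $V$ by finitely generated projectives yields a resolution of $\So V$ by finitely generated projectives, so $V$ being $\FPi$ forces $\So V$ to be $\FPi$.

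For the reverse direction, the strategy is to relate $V$ to $\So V$ via a short exact sequence of $\tC$-modules in which every term other than $V$ is either $\So V$ itself or can be shown to be $\FPi$ by elementary means. I expect to use the unit (or counit) of the adjunction between $\So$ and the coinduction functor of \cite{GL3}: the kernel and cokernel of the resulting natural comparison map $V \to \So V$ (or its adjoint variant) should be torsion modules concentrated in a bounded range of small degrees. For such a torsion module supported at finitely many objects, being $\FPi$ reduces to the Noetherian hypothesis on $\mk$ together with local finiteness of $\tC$: on each object the value is a finitely generated $\mk$-module, and the principal projective $\tC(i,-)$ begins a projective cover whose syzygies remain of the same form, so the resolution can be continued with finitely generated projectives at every step. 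Combining this with the assumed $\FPi$ property of $\So V$ via the horseshoe lemma then delivers that $V$ itself is $\FPi$.

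With the equivalence in hand, the ``in particular'' assertion is immediate. If $V$ is a finitely generated torsion $\tC$-module, its generators lie in finitely many degrees and, by torsion, each is killed by all morphisms into sufficiently large objects; hence $\So^a V = 0$ for all $a$ larger than a computable bound depending only on the torsion degrees and supports of the generators. Since the zero module is trivially $\FPi$, descending induction through $\So^{a-1}V, \So^{a-2}V, \ldots, \So V, V$ using the equivalence just established shows that each $\So^j V$ is $\FPi$, and in particular so is $V$.

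The main obstacle is the reverse direction: making precise the ``missing information'' between $V$ and $\So V$ --- namely the part supported at small objects that is invisible to $\So$ --- and showing that this piece is itself $\FPi$. This is exactly where the hypothesis that $\So$ preserves finitely generated projectives is crucial, and where the adjunction machinery developed in \cite{GL3}, together with a careful low-degree analysis of the comparison map, is expected to carry the argument.
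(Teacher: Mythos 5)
Your forward direction and the ``in particular'' outline match the paper. The gap is in the reverse direction, and it is fatal as written.

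You claim that a torsion module supported at finitely many objects ``being $\FPi$ reduces to the Noetherian hypothesis on $\mk$ together with local finiteness of $\tC$\ldots so the resolution can be continued with finitely generated projectives at every step.'' This is false, and the paper's own quiver example in Section 2 shows exactly why: for the $\mk$-linearization of the quiver with vertex $0$ mapping to each of $1,2,3,\ldots$ by a single arrow, the category is locally finite of type $\Ai$, $\mk$ can be taken to be a field, and yet the kernel of the projective cover $\tC(0,-) \twoheadrightarrow \tC(0,0)$ is $\tau_1(\tC 1_0)$, which is \emph{not} finitely generated. So the simple torsion module $\tC(0,0)$ is not even finitely presented, let alone $\FPi$, despite Noetherianity of $\mk$ and local finiteness. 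The problem is that the first syzygy of a torsion module is a submodule of a projective, not a torsion module ``of the same form,'' so the claimed induction on syzygies does not close. The claim is also circular: that finitely generated torsion modules are $\FPi$ is precisely the ``in particular'' consequence you are trying to deduce, and it requires the existence of a self-embedding with the FGP property --- it is not a consequence of $\mk$ and local finiteness alone.

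There is a second, smaller issue: you invoke a natural comparison map between $V$ and $\So V$ coming from an adjunction. For $\FI$ there is a familiar natural transformation $V \to \So V$, but for an arbitrary self-embedding functor $\iota$ as defined here nothing of the sort is guaranteed, and the paper never constructs one. The paper instead argues the reverse direction by a direct recursive syzygy argument: from $\So V$ finitely generated deduce $V$ finitely generated (Lemma 3.4, which uses only faithfulness of $\iota$), choose a surjection $P^0 \twoheadrightarrow V$ with $P^0$ finitely generated projective and kernel $V^1$, apply $\So$ and use the long exact sequence in homology together with the fact that $H_0(\So V)$ and $H_1(\So V)$ are finitely generated (since $\So V$ is $\FPi$) to conclude that $H_0(\So V^1)$, hence $\So V^1$, hence $V^1$ is finitely generated, and that $\So V^1$ is again $\FPi$ because its higher homologies agree with those of $\So V$ shifted by one. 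Then replace $V$ by $V^1$ and recurse. The shift functor is doing all the work; there is no torsion piece to analyse ``by elementary means.''

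If you want to repair your proposal, drop the comparison-map idea and replace the elementary torsion claim by this syzygy recursion, which is where the FGP hypothesis earns its keep. The ``in particular'' derivation is then fine: $\Sa V = 0$ for $a > \td(V)$, and the established equivalence lets you walk back from $\Sa V$ to $V$.
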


\begin{remark} \normalfont
Of course, if $\tC$ is locally Noetherian, the conclusion of this theorem is implied trivially by the Noetherian property. The usefulness of this theorem is that it does not require $\tC$ to be \emph{locally Noetherian}. Indeed, there are many locally finite $\mk$-linear categories of type $\Ai$ which are not locally Noetherian, and in practice it is difficult to check the locally Noetherian property of $\tC$.
\end{remark}

\begin{remark}
A similar result was pointed out by Franjou, Lannes, and Schwartz earlier for the category of vector spaces over finite fields; see \cite[Proposition 10.1]{FLS}.
\end{remark}

Usually it is hard to estimate homological degrees of finitely generated torsion $\tC$-modules $V$. However, if the self-embedding functor is a \emph{genetic functor} (see Definition \ref{genetic functors}, then one can prove that $V$ has Castelnuovo-Mumford regularity (see \cite{ES} for a definition in commutative algebra) expressed in terms of $\td(V)$ only.

\begin{theorem}[Castlenuovo-Mumford regularity] \label{second main result}
Let $\mk$ and $\tC$ be as in the previous theorem, and let $V$ be a $\FPi$ module of $\tC$. Suppose that $\tC$ is equipped with a genetic functor.
\begin{enumerate}
\item If there exists a certain $a \in \Z$ such that
\begin{equation*}
\hd_s (\Sa V) \leqslant \gd (\Sa V) + s,
\end{equation*}
for $s \geqslant 0$, then
\begin{equation*}
\hd_s (V) \leqslant \gd (V) + a + s.
\end{equation*}

\item
If $V$ is a finitely generated torsion module, then
\begin{equation*}
\hd_s (V) \leqslant \td(V) + s.
\end{equation*}
\end{enumerate}
\end{theorem}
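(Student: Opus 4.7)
My plan is to prove both statements by induction, leveraging short exact sequences produced by the genetic functor to relate $V$ to its shift $\So V$ (and iteratively to $\Sa V$). Throughout, I would apply the long exact sequence for $\Tor_*^{\tC}(\tC_0,-)$ and track how $\gd$ and $\hd_s$ transform under shifts.

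For part (1), I would induct on the shift parameter $a$. The base case $a=0$ is immediate since $S_0=\mathrm{id}$ and the hypothesis then coincides with the conclusion. For the inductive step, I expect the genetic property of the self-embedding functor to furnish a natural short exact sequence — or a pair of them, fit together from a four-term sequence
\begin{equation*}
0 \to K \to V \to \So V \to C \to 0
\end{equation*}
— whose kernel $K$ and cokernel $C$ are torsion modules concentrated in small degrees with homological degrees controlled by $\gd(V)$. Passing to the long exact sequence for $H_\bullet$, and noting that $\gd(\So V) \leqslant \gd(V)$ (with the object index dropped by $1$), one gets a recursion of the shape $\hd_s(V) \leqslant \max\{\hd_s(\So V)+1,\, \text{lower-order terms from } K \text{ and } C\}$. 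Iterating this estimate $a$ times reduces the problem to $\Sa V$, where by hypothesis $\hd_s(\Sa V) \leqslant \gd(\Sa V)+s \leqslant \gd(V)+s$, yielding exactly $\hd_s(V) \leqslant \gd(V)+a+s$.

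For part (2), the idea is first to reduce to part (1) and then tighten. Any finitely generated torsion $V$ is annihilated by $\Sa$ as soon as $a>\td(V)$, so the hypothesis of (1) is satisfied vacuously, giving $\hd_s(V) \leqslant \gd(V)+a+s$; this is in general too weak since we want $\td(V)+s$. To sharpen, I would run a secondary induction on $\td(V)$. Writing $t=\td(V)$, the submodule $W \subseteq V$ generated by the value of $V$ at object $t$ is concentrated in that single degree, producing
\begin{equation*}
0 \to W \to V \to V/W \to 0
\end{equation*}
with $\td(V/W)<t$. The long exact sequence for $H_\bullet$ reduces $\hd_s(V)$ to bounds on $\hd_s(V/W)$, handled by the inductive hypothesis, and on $\hd_s(W)$; since $W$ sits in a single degree, it satisfies $\gd(W)=\td(W)=t$, and applying part (1) (or a direct resolution) gives $\hd_s(W) \leqslant t+s$.

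The crux of the argument, and the main obstacle, is part (1), specifically the construction and control of the natural exact sequence linking $V$ with $\So V$. This is precisely the extra data encoded in the definition of a \emph{genetic} functor beyond the general self-embedding functor used in Theorem \ref{first main result}: without it, there is no reason for $\gd(\So V)$ to be comparable to $\gd(V)$ or for the kernel and cokernel to have small torsion degree, and the induction on $a$ collapses. Verifying that the genetic property delivers this short exact sequence with the required bounds on $K$ and $C$ is, I expect, the technically involved step.
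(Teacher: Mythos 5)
The high-level strategy you describe — induction, long exact sequences, reducing from $V$ to $\So V$ and iterating down to $\Sa V$ — matches the paper's, but the specific mechanism you posit for part (1) is not what a genetic functor provides, and this is a genuine gap. You propose a natural four-term exact sequence $0 \to K \to V \to \So V \to C \to 0$ as the engine of the recursion. In the general framework of this paper there is no natural transformation $\mathrm{id} \to \So$; such a map exists for $\FI$ via inclusions of finite sets, but the definition of a genetic functor here is purely the FGP condition plus $\gd(\So(\tC 1_s)) \leqslant s$, and neither gives a natural comparison map between a module and its shift. So the sequence whose existence you flag as ``the technically involved step'' is not available, and the recursion you build on it does not get off the ground.

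What the paper actually does is compare projective presentations rather than the modules themselves. Take a short exact sequence $0 \to V^1 \to P \to V \to 0$ with $P$ finitely generated projective and $\gd(P) = \gd(V)$. Since $\So$ is exact, $0 \to \So V^1 \to \So P \to \So V \to 0$ is again exact, and by the genetic property $\So P$ is projective with $\gd(\So P) \leqslant \gd(P)$. Sandwiching the two resulting four-term $\Tor$ sequences and using $\gd(\So W) \leqslant \gd(W) \leqslant \gd(\So W) + 1$ (the two-sided refinement of the shift-and-generating-degree comparison that genetic functors afford) yields the key estimate
\begin{equation*}
\hd_s(V) \leqslant \max\{\hd_0(V)+1,\ \ldots,\ \hd_{s-1}(V)+1,\ \hd_s(\So V)+1\},
\end{equation*}
proved by a secondary induction replacing $V$ by $V^1$. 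Part (1) then follows from a double induction on $s$ and $a$, and part (2) from induction on $\td(V)$ using that $\td(\So V) = \td(V) - 1$ (so there is no need to split off a top-degree submodule $W$, as you suggest; applying $\So$ directly already lowers $\td$ by one). Your plan for part (2) could be made to work, but it leans on part (1), which in turn needs this presentation-comparison lemma rather than a nonexistent natural map.
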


\begin{remark} \normalfont
Although in this paper we restrict ourselves to the setting of $\FPi$ modules, with the same essential idea and suitable modifications, the above regularity result holds for all representations generated in finite degrees, which might not be finitely generated. For example, an infinitely generated representation concentrated in one object is generated in finite degrees.
\end{remark}

\begin{remark} \normalfont
We remind the reader that the above result actually can be used to estimate homological degrees of many interesting modules besides torsion modules. Examples include truncations of projective modules, ``syzygies" of finitely generated torsion modules, etc. Moreover, if a $\tC$-module $V$ is \emph{almost isomorphic to} (see Definition \ref{almost isomorphic}) one of the above modules, then homological degrees of $V$ can be estimated as well by using this result.
\end{remark}

\begin{remark} \normalfont
The essential idea of the first statement of this theorem is that while applying the shift functor enough times to a finitely generated representation $V$, it often happens that the shifted module becomes very simple, and hence its homological degrees can be easily estimated. For instance, in a forthcoming paper it will be proved that if we applying the shift functor enough times to an arbitrary finitely generated representation of $\FI$, then higher homologies of the shift module all vanish. Therefore, the condition in the first statement of this theorem is fulfilled trivially.
\end{remark}

\subsection{Application in representation stability theory}

These results immediately apply to a few combinatorial categories in representation stability theory. Let us briefly recall their definitions. For more details, the reader may refer to \cite{SS2} or \cite{GL2}.

\begin{example}[The category $\FI_G$]
Let $G$ be a finite group. The category $\C = \FI_G$ has objects nonnegative integers. For $i, j \in \Z$, $\C(i, j)$ is the set of pairs $(f, g)$ where $f: \{1, \ldots, i \} \to \{1, \ldots, j \}$ is an injection and $g: \{ 1, \ldots, i \} \to G$ is an arbitrary map. For $(f_1, g_1) \in \C(i, j)$ and $(f_2, g_2)\in \C (j, k)$, their composition is $(f_3, g_3)$ where
\begin{equation*}
f_3 = f_2 \circ f_1, \quad \text{and } g_3 (r)= g_2 (f_1(r)) \cdot g_1(r)
\end{equation*}
for $1 \leqslant r \leqslant i$.
\end{example}

\begin{example}[The category $\VI$]
Let $\Fq$ be a finite field. The category $\VI$ has objects nonnegative integers. Morphisms from $i$ to $j$ are linear injections from $\Fq^{\oplus i}$ to $\Fq^{\oplus j}$.
\end{example}

\begin{example}[The category $\OI_G$]
As a subcategory of $\FI_G$, $\OI_G$ has the same objects as $\FI_G$. For $x, y\in \Z$, a morphism $(f,c) \in \FI_G (x, y)$ is contained in $\OI_G(x, y)$ if and only if $f$ is increasing.
\end{example}

\begin{example}[The category $\FI_{d}$]
Let $d$ be a positive integer. The category $\FI_{d}$ has objects nonnegative integers. For $x, y \in \Z$, $\FI_d(x,y)$ is the set of all pairs $(f,\delta)$ where $f: [x]\to [y]$ is injective, and $\delta: [y] \setminus \mathrm{Im}(f) \to [d]$ is an arbitrary map. For $(f_1,\delta_1) \in \FI_d (x,y)$ and $(f_2,\delta_2) \in \FI_d(y,z)$, their composition is $(f_3, \delta_3)$ where $f_3 = f_2 \circ f_1$ and
\begin{equation*}
c_3 (m) = \left\{ \begin{array}{ll}
c_1(r) & \mbox{ if } m=f_2(r) \mbox{ for some } r, \\
c_2(m) & \mbox{ else. } \end{array} \right.
\end{equation*}
\end{example}

\begin{example}[The category $\OI_d$]
As a subcategory of $\FI_d$, $\OI_d$ has the same objects. For $x, y \in \Z$, $\OI_d (x,y)$ consists of pairs $(f, \delta)$ such that $f$ is increasing.
\end{example}

\begin{example}[The opposite category $\FS_G^{\mathrm{op}}$ of $\FS_G$]
Let $G$ be a finite group. The category $\FS_G$ has objects all positive integers. For two objects $x$ and $y$, $\FS_G (y,x)$ consists of pairs $(f, c)$ where $f: [y] \to [x]$ is a surjection, and $c: [y] \to G$ is an arbitrary map. For $(f_1, c_1) \in \FS_G (y, x)$ and $(f_2, c_2)\in \FS_G(z,y)$, their composition is $(f_3, c_3) $ where
\begin{equation*}
f_3 = f_1 \circ f_2, \quad \text{and } c_3 (r) = c_1 (f_2(r)) \cdot c_2(r)
\end{equation*}
for $r \in [z]$.
\end{example}

\begin{example}[The opposite category $\OS_G^{\mathrm{op}}$ of $\OS_G$]
The subcategory $\OS_G$ has the same objects as $\FS_G$. For two objects $x$ and $y$, $\OS_G (y,x)$ consists of pairs $(f,c) \in \FS_G (y, x)$ where $f$ is an ordered surjection.
\end{example}

It has been shown in \cite{GL2} that $\mk$-linearizations of the above categories all have genetic functors. Thus we have:

\begin{corollary} \label{main corollary}
Let $\tC$ be the $\mk$-linearization of one of the following categories:
\begin{equation*}
\FI_G, \quad \OI_G, \quad \VI, \quad \FI_d, \quad \OI_d, \quad \FS_G^{\mathrm{op}}, \quad \OS_G^{\mathrm{op}},
\end{equation*}
and let $V$ be a finitely generated torsion $\tC$-module. Then for $s \in \Z$,
\begin{equation*}
\hd_s (V) \leqslant \td(V) + s.
\end{equation*}
\end{corollary}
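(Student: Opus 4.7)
The plan is to deduce this corollary as an essentially immediate consequence of Theorem \ref{second main result}(2) combined with Theorem \ref{first main result}, once one verifies that each category in the list fits the framework established earlier in the paper.

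First, I would check that each of the categories $\FI_G, \OI_G, \VI, \FI_d, \OI_d, \FS_G^{\mathrm{op}}, \OS_G^{\mathrm{op}}$, after $\mk$-linearization, yields a locally finite $\mk$-linear category of type $\Ai$ in the sense required for Theorems \ref{first main result} and \ref{second main result}. In each case, objects are parameterized by $\Z$; morphisms in the six ``FI/OI-like'' examples go only from smaller to larger indices, while in the two opposite categories of $\FS_G$ and $\OS_G$ the direction is reversed in such a way that again no nonzero morphisms exist from bigger objects to smaller ones; and each morphism set is finite (surjections between finite sets with group decorations, or linear injections between finite-dimensional vector spaces over a finite field), so their $\mk$-linearizations are finitely generated $\mk$-modules.

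Next, I would invoke the result of \cite{GL2} stating that each of these $\mk$-linearized categories $\tC$ is equipped with a genetic functor of degree $1$; by definition of a genetic functor, the induced shift functor $\So$ preserves finitely generated projective $\tC$-modules. This places us squarely inside the hypotheses of both main theorems. Given a finitely generated torsion $\tC$-module $V$, Theorem \ref{first main result} implies that $V$ is $\FPi$. Then applying Theorem \ref{second main result}(2), which is exactly the Castelnuovo--Mumford regularity bound for finitely generated torsion $\FPi$ modules in the presence of a genetic functor, yields the desired inequality $\hd_s(V) \leqslant \td(V) + s$ for every $s \in \Z$.

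There is no genuine obstacle here beyond bookkeeping: the entire substance is concentrated in the two theorems already proved and in the verification, carried out in \cite{GL2}, that each listed category admits a genetic functor. The only point requiring mild care is to confirm for each specific example that the self-embedding functor chosen in \cite{GL2} does have degree $1$ and that the category is indeed of type $\Ai$ rather than merely graded; for the opposite categories $\FS_G^{\mathrm{op}}$ and $\OS_G^{\mathrm{op}}$, one has to remember that ``degree'' is measured after passing to the opposite, which flips the natural direction of the surjections so that the self-embedding increases the index by $1$ as required.
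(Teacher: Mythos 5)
Your proof is correct and takes essentially the same approach as the paper: cite \cite{GL2} for the existence of genetic functors on these $\mk$-linearized categories and then apply Theorem \ref{second main result}(2) (restated as Theorem \ref{hd of torsion modules}) to a finitely generated torsion module. The only cosmetic difference is that you make the intermediate step of invoking Theorem \ref{first main result} for the $\FPi$ property explicit, whereas the paper absorbs this into the statement of Theorem \ref{hd of torsion modules}.
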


The category $\FI$ has many interesting and surprising representational and homological properties. In particular, when $\mk$ is a field of characteristic 0, finitely generated projective $\FI$-modules are injective as well, and every finitely generated $\FI$-module $V$ has a finite injective resolution; see \cite{SS1, GL3}. Using this result, as well as the upper bound given in \cite[Theorem A]{CE}, we get another upper bound for homological degrees of finitely generated $\FI$-modules $V$. That is:

\begin{theorem} \label{third main result}
Let $\mk$ be a field of characteristic 0, and let $V$ be a finitely generated $\FI$-module. Then for $s \geqslant 1$,
\begin{equation*}
\hd_s (V) \leqslant \max \{2\gd (V) - 1, \, \td(V) \} + s.
\end{equation*}
\end{theorem}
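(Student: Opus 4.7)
The strategy is to split $V$ via the short exact sequence $0 \to V_T \to V \to W \to 0$, where $V_T$ is the (finitely generated) torsion submodule of $V$ and $W = V/V_T$ is torsion-free. One easily checks that $\td(V_T) = \td(V)$ and $\gd(W) \leq \gd(V)$, and the Tor long exact sequence yields $\hd_s(V) \leq \max\{\hd_s(V_T),\,\hd_s(W)\}$ for all $s$. It therefore suffices to bound $\hd_s(V_T)$ and $\hd_s(W)$ separately.

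The torsion piece is immediate: Corollary \ref{main corollary} gives $\hd_s(V_T) \leq \td(V_T) + s = \td(V) + s$. For the torsion-free quotient $W$, I would invoke the characteristic zero structure theorem from \cite{SS1, GL3}: $W$ admits an embedding into a finitely generated projective-injective $\FI$-module $J$ with finitely generated torsion cokernel $Q = J/W$. Because $J$ is projective, $H_s(J) = 0$ for $s \geq 1$, and the Tor long exact sequence associated with $0 \to W \to J \to Q \to 0$ gives $\hd_s(W) \leq \hd_{s+1}(Q)$ for $s \geq 1$. Applying Corollary \ref{main corollary} to the torsion module $Q$ then yields $\hd_{s+1}(Q) \leq \td(Q) + s + 1$.

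The crux is bounding $\td(Q)$, which measures the range in which $W \hookrightarrow J$ fails to be an isomorphism---a central-stability statement. This is precisely where \cite[Theorem A]{CE} enters: the Church--Ellenberg bound on $\hd_s$ in terms of $\hd_0$ and $\hd_1$, combined with the description of the injective hull in characteristic zero, forces $\td(Q) \leq 2\gd(W) - 2 \leq 2\gd(V) - 2$. Consequently $\hd_s(W) \leq 2\gd(V) - 1 + s$, and combining with the torsion bound,
\[
\hd_s(V) \;\leq\; \max\bigl\{\td(V) + s,\; 2\gd(V) - 1 + s\bigr\} \;=\; \max\{2\gd(V) - 1,\,\td(V)\} + s.
\]
The main obstacle is pinning down the exact constant in $\td(Q) \leq 2\gd(W) - 2$; getting the ``$-1$'' inside the max correct requires a careful coordination of the Church--Ellenberg inequality with the structure of finitely generated projective-injective $\FI$-modules in characteristic zero. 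An alternative, perhaps cleaner, route is to apply Theorem \ref{second main result}(1) directly, using a Nagpal-type result showing that $S_a W$ becomes projective already for $a \leq \gd(W) - 1$, at which point Theorem \ref{second main result}(1) yields $\hd_s(W) \leq 2\gd(W) - 1 + s$ without the need to track $Q$ explicitly.
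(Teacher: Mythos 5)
Your high-level plan matches the paper's: decompose $V$ via $0 \to V_T \to V \to V_F \to 0$, bound the torsion piece by Corollary \ref{main corollary}, and bound the torsionless piece $W = V_F$ by embedding it into a finitely generated projective-injective $P^0$ with $\gd(P^0) \leqslant \gd(W)-1$ (as in \cite[Proposition 7.5]{GL3}) and transferring homology to the cokernel $Q = P^0/W$ via the long exact sequence. That much is correct and is exactly what the paper does.

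The gap is in how you bound the cokernel. You propose to bound $\td(Q)$ and then invoke the torsion-module regularity bound $\hd_{s+1}(Q) \leqslant \td(Q) + s + 1$. But the inequality $\td(Q) \leqslant 2\gd(W) - 2$ is not established, and as you yourself observe, this is ``the main obstacle.'' The difficulty is structural: Church--Ellenberg's \cite[Theorem A]{CE} controls $\hd_s(Q)$, not $\td(Q)$, and the only relation between them proved in the paper goes the wrong way ($\hd_s(Q) \leqslant \td(Q) + s$, from Theorem \ref{second main result}). Converting a bound on $\hd_s(Q)$ into a bound on $\td(Q)$ would require the converse inequality $\td(Q) \leqslant \sup_s(\hd_s(Q)-s)$, which is not available. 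You also quietly assume $Q$ is torsion; the paper's remark after the construction only says the cokernel ``might not be torsionless'' and never asserts it is torsion, and the argument the paper gives does not need it.

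The paper avoids the detour through $\td(Q)$ entirely. After minimizing $P^0$ so it is a projective cover of $Q$, one has $\Omega Q \cong W$, hence $H_s(W) \cong H_{s+1}(Q)$ (Lemma \ref{hd of cokernel}) and $\hd_1(Q) = \gd(\Omega Q) = \gd(W)$. Then \cite[Theorem A]{CE} applied directly to $Q$ gives
\[
\hd_s(W) = \hd_{s+1}(Q) \leqslant \gd(Q) + \hd_1(Q) + s \leqslant (\gd(W)-1) + \gd(W) + s = 2\gd(W) + s - 1,
\]
with no need to know $\td(Q)$ or whether $Q$ is torsion. Your alternative route through Theorem \ref{second main result}(1) together with a Nagpal-type statement that $\Sa W$ is projective for $a \leqslant \gd(W)-1$ would also work if that shift bound were available, but it is not proved in this paper (the paper explicitly defers it to forthcoming work), so it cannot be used here either.
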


\begin{remark} \normalfont
In \cite{CE} Church and Ellenberg  gave the following upper bounds for homological degrees of $\FI$-modules for an arbitrary ring:
\begin{equation*}
\hd_s (V) \leqslant \gd (V) + \hd_1 (V) + s - 1.
\end{equation*}
Compared to it, the conclusion of Theorem \ref{third main result} has a big shortcoming. That is, it depends on the existence of a finite injective resolution for every finitely generated $\FI$-module. When $\mk$ is an arbitrary commutative Noetherian ring, this fact may no longer be true, even for fields with a positive characteristic. \footnote{Using a crucial technique developed in \cite{N, CE}, in a forthcoming paper we will remove from the above theorem the unnecessary assumption that $\mk$ is a field of characteristic 0.}

But our result does have some advantages. Firstly, in practice it is usually easier to obtain $\td (V)$ compared to $\hd_1 (V)$. Moreover, if $V$ is torsionless, then $\td(V) = 0$, and one deduces
\begin{equation*}
\hd_s (V) \leqslant 2\gd (V) + s - 1
\end{equation*}
for $s \geqslant 1$. Since by Lemma \ref{compare gd to hd} one always has $\hd_1(V) > \gd(V)$ when $k$ is a field of characteristic 0, our bounds are a little more optimal for these modules; see Example \ref{example}.

In \cite{SS1} Sam and Snowden have shown that when $\mk$ is a field of characteristic 0, every finitely generated $\FI$-module has finite Castlenuovo-Mumford regularity; see \cite[Corollary 6.3.5]{SS2}. But an explicit upper bound of this regularity was not given.
\end{remark}

\subsection{Organization}

This paper is organized as follows. In Section 2 we give basic definitions and elementary results used throughout this paper. In particular, \emph{homological degrees}, \emph{torsion degrees}, and \emph{generating degrees} of modules are defined and their relationships are clarified.

General results are described and proved in Sections 3 and 4. In Section 3 we consider self-embedding functors and their induced shift functors. Under a moderate assumption, we show that a representation is finitely generated (resp., finitely presented; $\FPi$) if and only if so is the shifted one. Using this, one can easily deduce Theorem \ref{first main result}. Genetic functors and their induced shift functors are studied in Section 4. We describe a crucial recursive procedure to compare homological degrees of a module to those of the shifted module, and prove Theorem \ref{second main result}.

Applications of general results in representation stability theory are collected in Section 5. Corollary \ref{main corollary} is an immediate result of Theorem \ref{second main result} since the existence of genetic functors for these categories was already proved in \cite{GL2}. Moreover, when $\mk$ is a field of characteristic 0, we give another proof of the Koszulity of these combinatorial categories, and show that the category of Koszul modules is closed under truncation functors (Proposition \ref{Koszul modules}). Finally, using the method described in \cite{GL3}, we explicitly construct an injective resolution for every finitely generated $\FI$-module, and use this resolution, as well as the general results, to establish Theorem \ref{third main result}.

\section{Preliminaries}

Throughout this paper let $\mk$ be a commutative Noetherian ring with identity, and let $\tC$ be a (small) $\mk$-linear category. That is, for $x, y \in \Ob \tC$, the morphism set $\tC (x, y)$ is a $\mk$-module; furthermore, composition of morphisms is $\mk$-linear. Note that for every object $x \in \Ob \tC$, $\tC (x, x)$ is a $\mk$-algebra with identity $1_x$.

\subsection{Type $\Ai$ categories and their representations}

Recall that the $\mk$-linear category $\tC$ is of \emph{type $\Ai$} if $\Ob \tC = \Z$, and $\C (i, j) = 0$ whenever $i > j$. For technical purpose, we suppose that $\tC$ satisfies the following \emph{locally finite} condition: $\C (i, j)$ is a finitely generated $\mk$-module for all $i, j \in \Z$.

A \textit{representation} $V$ of $\tC$ (or a $\tC$-\emph{module}) by definition is a covariant $\mk$-linear functor from $\tC$ to $\mk \Module$, the category of left $\mk$-modules. For each object $i \in \Z$, we let $V_i = V(i)$ be the image of $i $ under $V$, which is a $\C(i, i)$-module, called the \emph{value} of $V$ on $i$.

\begin{remark} \normalfont
Clearly $\tC$ can be viewed as a (non-unital) algebra $A_{\tC}$ in a natural way. Therefore, given a representation $V$ of $\tC$, the $\mk$-module $\bigoplus _{i \in \Z} V_i$ is an $A_{\tC}$-module, denoted by $V$ again by abuse of notation. The category of representations of $\tC$ can be identified with a full subcategory of $A_{\tC}$-modules. That is, an $A_{\tC}$-module $V$ is a representation of $\tC$ if and only if $V \cong \bigoplus _{i \in \Z} 1_i V$ as $\mk$-modules. Sometimes we regard $\tC$ as an algebra via identifying it with $A_{\tC}$ and hopefully this will not cause confusion to the reader.
\end{remark}

In this paper we only consider representations of $\tC$, or $\tC$-modules, rather than all $A_{\tC}$-modules. Denote by $\tC \Module$ the category all $\tC$-modules, and by $\tC \lfmod$ the category of \emph{locally finite} $\tC$-modules. We remind the reader that a $\tC$-module $V$ is locally finite if for each $i \in \Z$, the $\tC(i,i)$-module $V_i$ restricted as a $\mk$-module, is finitely generated. Since $\mk$ is Noetherian, and kernels and cokernels of $\tC$-modules are defined via values on objects, the category $\tC \lfmod$ is abelian.

\subsection{Finitely generated modules}

For $i \in \Z$, the representable functor $\tC (i, -)$ is a projective object in $\tC \Module$. We identify it with the $\tC$-module $\tC 1_i$ consisting of finite linear combinations of morphisms starting from the object $i$.

\begin{lemma} \label{enough projective}
The category $\tC \lfmod$ has enough projectives.
\end{lemma}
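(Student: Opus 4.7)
The plan is to show that for each locally finite module $V$, we can build a projective cover out of direct sums of representable functors $\tC(i,-)$. The first thing I would verify is that each representable $\tC(i,-) = \tC 1_i$ itself lies in $\tC\lfmod$: its value on $j$ is $\tC(i,j)$, which is finitely generated over $\mk$ by the locally finite hypothesis on $\tC$.

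Next, given $V \in \tC\lfmod$, for each $i \in \Z$ pick finitely many $\mk$-generators $v_{i,1}, \ldots, v_{i,n_i}$ of the $\mk$-module $V_i$. Each $v_{i,j}$ induces, by Yoneda, a morphism $\tC(i,-) \to V$ sending $1_i$ to $v_{i,j}$. Summing these gives a map
\begin{equation*}
P := \bigoplus_{i \in \Z} \bigoplus_{j=1}^{n_i} \tC(i,-) \longrightarrow V
\end{equation*}
which is surjective on values at every object. The key point to check is that $P$ itself is locally finite. The value of $P$ on any $k \in \Z$ is $\bigoplus_{i \in \Z} \bigoplus_{j=1}^{n_i} \tC(i,k)$, and because $\tC$ is of type $\Ai$ we have $\tC(i,k) = 0$ whenever $i > k$. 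So the direct sum collapses to $\bigoplus_{i \leqslant k} \bigoplus_{j=1}^{n_i} \tC(i,k)$, a finite direct sum of finitely generated $\mk$-modules; in particular it is finitely generated over $\mk$.

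Finally, I would argue that $P$ is projective not only in $\tC\Module$ but in the full subcategory $\tC\lfmod$. This is essentially automatic: for any surjection $A \to B$ between locally finite modules and any map $P \to B$, the lift $P \to A$ exists in $\tC\Module$ by projectivity of $P$ there, and since $\tC\lfmod$ is a full subcategory, that lift is already a morphism in $\tC\lfmod$. The only real content is the type $\Ai$ collapse that kept $P$ locally finite, so that is where I'd expect a careful reader to want the most detail; everything else reduces to Yoneda plus the fact that direct sums of projectives remain projective.
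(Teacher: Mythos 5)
Your proof is correct and takes essentially the same approach as the paper: choose finite sets of generators for each $V_i$, map the corresponding direct sum of representables $\tC(i,-)$ onto $V$, and use the type $\Ai$ collapse ($\tC(i,k)=0$ for $i>k$) to see that the resulting projective is locally finite. The only cosmetic difference is that you pick $\mk$-module generators for $V_i$ directly, while the paper first passes to $\tC(i,i)$-module generators; both yield the same surjection from the same $P$.
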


\begin{proof}
Indeed, for $V \in \tC \lfmod$, one has $V = \bigoplus _{i \in \Z} V_i$ such that the $\tC(i,i)$-module $V_i$ restricted as a $\mk$-module is finitely generated. Therefore, as a $\tC (i, i)$-module it is finitely generated as well. Now choose a surjection $\tC (i, i) ^{\oplus a_i} \to V_i$ of $\tC(i, i)$-modules for each $i \in \Z$. We get a surjection
\begin{equation*}
\bigoplus _{i \in \Z} \tC (i, i) ^{\oplus a_i} \to \bigoplus _{i \in \Z} V_i,
\end{equation*}
which induces a surjection of $\tC$-modules as follows:
\begin{equation*}
P = \bigoplus _{i \in \Z} \tC \otimes _{\tC (i, i)} \tC (i, i) ^{\oplus a_i} \cong \bigoplus _{i \in \Z} (\tC 1_i) ^{\oplus a_i} \to V.
\end{equation*}

One has to show that $P$ is locally finite. But this is clear since for every $j$,
\begin{equation*}
P_j = \bigoplus _{0 \leqslant i \leqslant j} \tC (i, j) ^{\oplus a_i}
\end{equation*}
which is a finitely generated $\mk$-module by the locally finite condition of $\tC$.
\end{proof}

Based on this lemma, one can define finitely generated $\tC$-modules.

\begin{definition}
A $\tC$-module $V$ is finitely generated if there exists a surjective $\tC$-module homomorphism
\begin{equation*}
\pi: \bigoplus _{i \in \Z} (\tC 1_i) ^{\oplus a_i} \to V
\end{equation*}
such that $\sum _{i \in \Z} a_i < \infty$.
\end{definition}

Finitely generated $\tC$-modules are always locally finite. Therefore, the category of finitely generated $\tC$-modules, denoted by $\tC \fgmod$, is a full subcategory of $\tC \lfmod$. However, $\tC \fgmod$ in general is not abelian, and it is abelian if and only if $\tC$ is a \emph{locally Noetherian} category; i.e., submodules of $\tC 1_i$ are finitely generated for $i \in \Z$.

\begin{remark} \normalfont \label{generated in finite degrees}
A locally finite $\tC$-module $V$ is finitely generated if and only if it is generated in finite degrees. That is, $V$ is generated by the subset $\bigoplus _{i = 0}^N V_i$ for a certain $N \in \Z$; or equivalently, any submodule of $V$ containing $\bigoplus _{i = 0}^N V_i$ coincides with $V$.
\end{remark}

\subsection{Truncations}

Given $n \in \Z$, one defines the truncation functor of degree $n$
\begin{equation*}
\tau_n: \tC \Module \to \tC \Module, \quad V \mapsto \tau_n V = \bigoplus _{i \geqslant n} V_i.
\end{equation*}

Note that $\tau_n V$ is viewed as a $\tC$-module via setting its value on each $i$ with $i < n$ to be 0. Moreover, $\bigoplus _{i < n} V_i$ is also a $\tC$-module via identifying it with the quotient module $V / \tau_n V$. Clearly, $\tau_n$ is an exact functor, and one has
\begin{equation*}
\Hom _{\tC} (\tau_n V, W) \cong \Hom_{\tC} (\tau_n V, \tau_n W)
\end{equation*}
for $\tC$-modules $V$ and $W$.

The truncation functor $\tau_n$ preserves locally finite property, and hence induces a functor $\tC \lfmod \to \tC \lfmod$ which is still denoted by $\tau_n$. However, it does not preserve finitely generated property as shown by the following example.

\begin{example} \normalfont
Let $\tC$ be the $\mk$-linearization of the following quiver
\begin{equation*}
\xymatrix{
 & & 0 \ar[lld] \ar[ld] \ar[d] \ar[rd] \ar@{-->}[rrd] & & \\
1 & 2 & 3 & 4 & \ldots.
}
\end{equation*}
The reader can check that $\tC$ is a locally finite $\mk$-linear category of type $\Ai$. However, $\tau_1 (\tC 1_0)$ is not finitely generated.
\end{example}

\subsection{Torsion degrees and torsion modules}

Let $V$ be a locally finite $\tC$-module. The \emph{torsion degree} of $V$, denoted by $\td(V)$, is defined to be
\begin{equation*}
\td(V) = \sup \{ i \in \Z \mid \Hom _{\tC} (\tC(i,i), V) \neq 0 \},
\end{equation*}
where $\tC(i, i)$ is viewed as a $\tC$-module in a natural way. If the above set is empty, we set $\td(V) = -\infty$, and say that $V$ is \emph{torsionless}.

\begin{lemma} \label{compare td}
Let $0 \to U \to V \to W \to 0$ be a short exact sequence of locally finite $\tC$-modules. Then
\begin{equation*}
\td(U) \leqslant \td(V) \leqslant \max \{ \td(U), \td(W) \}.
\end{equation*}
\end{lemma}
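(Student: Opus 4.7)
The plan is to deduce both inequalities from the left exactness of $\Hom_{\tC}(\tC(i,i), -)$ applied to the given short exact sequence, together with a simple factoring argument for the upper bound.

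For the first inequality $\td(U) \leqslant \td(V)$, I would observe that since $U \hookrightarrow V$ is a monomorphism in $\tC \lfmod$, left exactness of the covariant Hom functor yields an injection
\begin{equation*}
\Hom_{\tC}(\tC(i,i), U) \hookrightarrow \Hom_{\tC}(\tC(i,i), V)
\end{equation*}
for every $i \in \Z$. Consequently, whenever the left side is nonzero so is the right side, which shows that the set of $i$ defining $\td(U)$ is contained in the set defining $\td(V)$, and the inequality on suprema follows.

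For the second inequality $\td(V) \leqslant \max\{\td(U), \td(W)\}$, set $m = \max\{\td(U), \td(W)\}$ (interpreting the inequality vacuously if $m = -\infty$). I would argue that for any $i > m$, every $\tC$-module map $\phi \colon \tC(i,i) \to V$ vanishes. Indeed, since $i > \td(W)$, the composite $\tC(i,i) \xrightarrow{\phi} V \twoheadrightarrow W$ must be zero, so by the universal property of kernels $\phi$ factors through a map $\psi \colon \tC(i,i) \to U$. Since $i > \td(U)$ we have $\psi = 0$, and because $U \hookrightarrow V$ is injective this forces $\phi = 0$. Hence $\Hom_{\tC}(\tC(i,i), V) = 0$ for all $i > m$, which means $\td(V) \leqslant m$. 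If $m = -\infty$ the argument still runs for every $i \in \Z$, giving $\td(V) = -\infty$ as well.

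There is no serious obstacle here: the statement is a standard consequence of left exactness plus a factoring diagram. The only point that deserves care is the bookkeeping when one of $\td(U)$, $\td(W)$, or $\td(V)$ equals $-\infty$ or $+\infty$, but the supremum formulation absorbs these cases automatically once one checks the set-theoretic inclusion at each finite level $i$.
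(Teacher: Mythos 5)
Your proof is correct and takes essentially the same approach as the paper's: both rest on the left exactness of $\Hom_{\tC}(\tC(i,i), -)$ applied to the short exact sequence. The only cosmetic difference is that you argue uniformly at the level of the index sets (and via a kernel-factoring for the upper bound), whereas the paper splits into the three cases $\td(V) = -\infty$, $\td(V) = +\infty$, and $\td(V)$ finite; the mathematical content is identical.
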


\begin{proof}
It $V$ is torsionless, then $\Hom _{\tC} (\tC(i,i), V) = 0$ for every $i \in \Z$, and hence $\Hom _{\tC} (\tC(i,i), U) = 0$ as well. Therefore, both $\td(U)$ and $\td(V)$ are $-\infty$, and the conclusion holds.

If $\td(V) = \infty$, then for every $N \in \Z$, one can find $i \in \Z$ with $i > N$ such that $\Hom _{\tC} (\tC(i,i), V)$ is nonzero. Applying the functor $\Hom _{\tC} (\tC(i,i), -)$ to the short exact sequence one deduces that either $\Hom _{\tC} (\tC(i,i), U)$ is nonzero or $\Hom _{\tC} (\tC(i,i), W)$ is nonzero. Consequently, either $\td(U)$ or $\td(V)$ is $\infty$. The conclusion still holds.

If $\td(V)$ is a finite number, we can let $s$ be an integer with $i > \td(V)$. By definition, $\Hom _{\tC} (\tC (i,i), V)$ is 0. Applying $\Hom _{\tC} (\tC(i,i), -)$ to the sequence one deduces $\Hom _{\tC} (\tC (i,i), U) = 0$, so $\td(U) \leqslant \td(V)$. To check the second inequality, one only needs to note that if both $\Hom _{\tC} (\tC (i,i), U)$ and $\Hom _{\tC} (\tC (i,i), W)$ are 0, then $\Hom _{\tC} (\tC (i,i), V)$ must be 0 as well.
\end{proof}

\begin{definition} \label{torsion modules}
A finitely generated $\tC$-module $U$ is a torsion module if there exists a certain $N \in \Z$ such that $U_i = 0$ for $i > N$. A locally finite $\tC$-module $V$ is a torsion module if $V$ can be written as a direct sum of finitely generated torsion $\tC$-modules.
\end{definition}

\begin{remark} \normalfont
Note that for an infinitely generated torsion module $V$, one may not be able to find a fixed number $N \in \Z$ such that $V_i = 0$ for $i > N$. However, if $V$ is a (nonzero) finitely generated torsion module, then one can let $N = \td(V)$, which is nothing but the last object on which the value of $V$ is nonzero.
\end{remark}

\subsection{Homologies of representations}

Let
\begin{equation*}
J = \bigoplus _{0 \leqslant i < j < \infty} \tC (i, j),
\end{equation*}
which is a two-sided ideal of $\tC$ (or more precisely, a two-sided ideal of the $\mk$-algebra $A_{\tC}$). Let
\begin{equation*}
\tC_0 = \bigoplus _{i \in \Z} \tC(i, i),
\end{equation*}
which is a (left and right) quotient $\tC$-module via identifying it with $\tC / J$.

For every $V \in \tC \lfmod$, the map
\begin{equation*}
V \mapsto V/JV \cong \tC_0 \otimes _{\tC} V
\end{equation*}
gives rise to a functor from $\tC \lfmod$ to itself. This is a right exact functor, so we define \emph{homologies} of $V$ by setting
\begin{equation*}
H_i (V) = \Tor _i^{\tC} (\tC_0, V),
\end{equation*}
which are $\tC$-modules again. Calculations of homologies of $V$ can be carried out via the usual homological method. That is, take a projective resolution of $V$ and tensor it with $\tC_0 \otimes _{\tC} -$.

\begin{remark} \normalfont
This definition is motivated by the definition of homologies of $\FI$-modules discussed in literatures such as \cite{CE, CEF, CEFN, GL4}. In \cite{CE, GL4}, it was pointed out that homologies of FI-modules can be computed through an explicit complex constructed by using the shift functor.
\end{remark}

\begin{remark} \normalfont \label{discrete structure of homologies}
From the above definition one knows that for each $i \in \Z$, $H_i(V)$ has the following decomposition as $\tC$-modules
\begin{equation*}
H_i(V) \cong \bigoplus _{j \in \Z} H_i(V)_j,
\end{equation*}
where $H_i(V)_j$ is a $\tC$-module concentrated on object $j$. In particular, $H_i(V)$ is a torsion module.
\end{remark}

The \emph{homological degrees} of $V$ are defined via letting
\begin{equation*}
\hd_i (V) = \td (H_i(V)), \, i \in \Z.
\end{equation*}
For $i = 0$, we call $\hd_0 (V)$ the \emph{generating degree} of $V$, denoted by $\gd(V)$. To justify this name, one only needs to keep in mind that $\gd(V)$ has the following interpretation: for a nonzero module $V$,
\begin{align*}
\gd(V) & = \sup \{i \in \Z \mid (V/JV)_i \neq 0 \}\\
& = \min \{ N \in \Z \cup \{\infty \} \mid V \text{ is generated by } \bigoplus _{i \leqslant N} V_i \}.
\end{align*}

The following result is trivial.

\begin{lemma} \label{compare gd}
Let $0 \to U \to V \to W \to 0$ be a short exact sequence of locally finite $\tC$-modules. Then
\begin{equation*}
\gd(W) \leqslant \gd(V) \leqslant \max \{\gd(U), \, \gd(W) \}.
\end{equation*}
\end{lemma}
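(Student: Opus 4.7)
The plan is to reduce both inequalities to elementary statements about the right exact functor $\tC_0 \otimes_{\tC} -$, since by definition $\gd(X) = \td(X/JX) = \sup\{i \in \Z \mid (X/JX)_i \neq 0\}$ for a locally finite $\tC$-module $X$. Applying this right exact functor to the given short exact sequence produces an exact sequence
\begin{equation*}
U/JU \longrightarrow V/JV \longrightarrow W/JW \longrightarrow 0
\end{equation*}
of locally finite $\tC$-modules. Because kernels, images and cokernels of $\tC$-module maps are computed object-wise, evaluating at any $i \in \Z$ preserves this exactness, giving
\begin{equation*}
(U/JU)_i \longrightarrow (V/JV)_i \longrightarrow (W/JW)_i \longrightarrow 0.
\end{equation*}

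For the first inequality, the surjection $V/JV \twoheadrightarrow W/JW$ forces $(V/JV)_i \neq 0$ whenever $(W/JW)_i \neq 0$, so taking the supremum over such $i$ yields $\gd(W) \leqslant \gd(V)$. For the second inequality, set $N = \max\{\gd(U), \gd(W)\}$. For every $i > N$ both $(U/JU)_i = 0$ and $(W/JW)_i = 0$, and the exactness above immediately gives $(V/JV)_i = 0$; hence $\gd(V) \leqslant N$.

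There is no real obstacle here, which is consistent with the author calling the lemma trivial; the only point worth double-checking is that the decomposition $\tC_0 = \bigoplus_{i \in \Z} \tC(i,i)$ lets us compute $(X/JX)_i$ as $X_i / (JX)_i$, so that passing to values commutes with the quotient by $J$. One could alternatively argue using the characterization from Remark \ref{generated in finite degrees}: $\gd(V) \leqslant N$ iff $V$ is generated by $\bigoplus_{i \leqslant N} V_i$, and it is straightforward that generators of $U$ and lifts of generators of $W$ together generate $V$, while a generating set of $V$ projects to a generating set of $W$. Both approaches give the same short argument.
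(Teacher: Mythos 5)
Your proof is correct. The paper offers no argument here (it simply calls the lemma trivial), so there is nothing to compare against, but your argument is the natural and complete one: right exactness of $\tC_0 \otimes_{\tC} -$ gives the exact sequence $U/JU \to V/JV \to W/JW \to 0$, object-wise evaluation preserves exactness, and the two inequalities fall out by inspecting which values can be nonzero. The identification $\gd(X) = \sup\{i \mid (X/JX)_i \neq 0\}$ that you rely on is exactly the characterization the paper records just before this lemma, and since $X/JX$ is discrete this agrees with $\td(X/JX)$. Your alternative argument via Remark \ref{generated in finite degrees} (lifting generators of $W$ and adjoining generators of $U$, and conversely projecting generators of $V$) is equally valid and perhaps closer to what one would write if one wanted to avoid homological language entirely; either would serve as the proof the author omitted.
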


\section{Super finitely presented property}

In this section we focus on categories equipped with self-embedding functors, and consider $\FPi$ modules of these categories.

\subsection{Definitions} \label{self-embedding functors}

An endofunctor $\iota: \tC \to \tC$ is called a \emph{self-embedding} functor of degree 1 if $\iota$ is faithful and one has $\iota (s) = s+1$ for $s \in \Z$. It induces a pull-back functor $\iota^{\ast}: \tC \Module \to \tC \Module$. Explicitly, if $V: \tC \to \mk \Module$ is a representation of $\tC$, then one defines $\iota ^{\ast} (V) = V \circ \iota$.

\begin{remark} \normalfont
Clearly, $\iota^{\ast}$ restricts to a functor $\tC \lfmod \to \tC \lfmod$. We denote this restricted functor $\iota^{\ast}$ as well. However, it is not clear whether $\iota ^{\ast}$ preserves finitely generated $\tC$-modules.
\end{remark}

\begin{definition} \label{shift functor}
Suppose that $\tC$ has a self-embedding functor $\iota$ of degree 1. The shift functor $\So$ of degree 1 is defined to be $\iota^{\ast} \circ \tau_1$, where $\tau_1$ is the truncation functor of degree 1. For $a \geqslant 1$, one can define $\Sa = \So \circ S_{a-1}$ recursively and call it the \emph{shift functor} of degree $a$.
\end{definition}

\begin{remark} \normalfont
When $\tC$ is the $\mk$-linearization of $\FI$, one readily sees that shift functors defined in our sense are precisely shift functors $\Sa$ introduced in \cite{CEF, CEFN}. Moreover, it is also clear that $\Sa = (\iota^{\ast})^a \circ \tau_a$, and all $\Sa$ are exact functors.
\end{remark}

In the rest of this paper we suppose that $\tC$ is equipped with a fixed self-embedding functor $\iota$ of degree 1, and fix $\So$ to be the corresponding shift functor of degree 1. Moreover, we assume that $\So$ satisfies the following property:
\begin{description}
\item[FGP] For every $s \in \Z$, $\So (\tC 1_s)$ is a finitely generated projective $\tC$-module.
\end{description}
In other words, $\So$ preserves finitely generated projective $\tC$-modules.

\subsection{Finitely generated property}

As the starting point, we show that shift functors restrict to endofunctors in $\tC \fgmod$.

\begin{lemma} \label{shift keeps fg}
A locally finite $\tC$-module $V$ is finitely generated if and only if so is $\Sa V$ for a certain $a \in \Z$.
\end{lemma}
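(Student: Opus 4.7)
The plan is to handle the two directions of the equivalence separately, since they rely on different ingredients.

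For the forward direction, the first step is to bootstrap the hypothesis FGP from $\So$ to every $\Sa$: namely, to show by induction on $a$ that $\Sa$ sends finitely generated projective $\tC$-modules to finitely generated projective $\tC$-modules. For the inductive step I would write $\Sa = \So \circ S_{a-1}$, note that any finitely generated projective is a summand of a finite direct sum $\bigoplus_i (\tC 1_i)^{\oplus b_i}$, and apply FGP together with the fact that $\So$ is additive and hence preserves finite direct sums and summands. With this in hand, the forward implication is immediate: if $V$ is finitely generated, pick a finite surjection $\bigoplus_{i=0}^M (\tC 1_i)^{\oplus a_i} \twoheadrightarrow V$ (using Lemma \ref{enough projective} and Remark \ref{generated in finite degrees}), apply the exact functor $\Sa = (\iota^{\ast})^a \circ \tau_a$, and observe that the source of the resulting surjection onto $\Sa V$ is finitely generated projective.

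The backward direction does not need FGP; it comes down to the functoriality of $\iota$. Suppose $\Sa V$ is finitely generated, so by Remark \ref{generated in finite degrees} there exists $N$ such that $\Sa V$ is generated, as a $\tC$-module, by $\bigoplus_{i=0}^N (\Sa V)_i$. Since $(\Sa V)_i = V_{i+a}$, and the action of a morphism $f \in \tC(i,j)$ on $\Sa V$ is by definition the action of $\iota^a(f) \in \tC(i+a, j+a)$ on $V$, any element $v \in V_j$ with $j > N+a$ can be rewritten, via the generation statement for $\Sa V$, as a $\mk$-linear combination of elements of the form $V(\iota^a(f))(v_k)$ with $v_k \in V_{i_k+a}$ and $i_k \leq N$. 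Hence $v$ lies in the $\tC$-submodule of $V$ generated by $\bigoplus_{i=0}^{N+a} V_i$. Together with local finiteness, which ensures that only finitely many generators are needed in each of degrees $0,\dots,N+a$, Remark \ref{generated in finite degrees} gives that $V$ is finitely generated.

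The main step requiring care is the bookkeeping in the backward direction, namely the identification of the formal $\tC$-action on $\Sa V$ with the honest action on $V$ through morphisms of the form $\iota^a(f)$. Once this translation is recorded the argument is essentially formal, so I do not anticipate any substantive obstacle.
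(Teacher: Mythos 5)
Your proof is correct and follows essentially the same approach as the paper: the forward direction uses FGP and exactness of the shift functor to push a finite projective cover through $\Sa$, and the backward direction translates generation of $\Sa V$ in degrees $\leqslant N$ into generation of $V$ in degrees $\leqslant N+a$ via the identification of the $\tC$-action on $\Sa V$ with the action through $\iota^a$. The only cosmetic difference is that you handle a general $a$ directly (bootstrapping FGP to all $\Sa$ by induction), whereas the paper proves the case $a=1$ and invokes recursion; these amount to the same thing.
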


\begin{proof}
The conclusion holds trivially for $a = 0$. We prove the conclusion for $a = 1$, since for an arbitrary $a \geqslant 1$, the conclusion follows from recursion.

If $V$ is finitely generated, by definition, one can find a surjective $\tC$-module homomorphism
\begin{equation*}
P = \bigoplus _{i \in \Z} (\tC 1_i) ^{\oplus a_i} \to V
\end{equation*}
such that $\sum _{i \in \Z} a_i < \infty$. Applying $\So$ to this surjection, one deduces a surjection $\So P \to \So V$. By the FGP condition, $\So P$ is a finitely generated $\tC$-module, so is its quotient $\So V$.

Now suppose that $\So V$ is finitely generated. By Remark \ref{generated in finite degrees}, we note that $\So V$ is generated in degrees $\leqslant N$ for a certain $N \in \Z$. Therefore, for any $i \geqslant s$, it is always true that
\begin{equation*}
\sum _{i \leqslant N} \tC (i, s) \cdot (\So V)_i = (\So V)_s.
\end{equation*}
But $(\So V)_i = V_{i+1}$, and $\iota$ identify $\tC(i,s)$ with the subset $\iota (\tC(i, s)) \subseteq \tC(i+1, s+1)$, this means
\begin{equation*}
\sum _{i \leqslant N} \iota (\tC (i, s)) \cdot V_{i+1} = V_{s+1},
\end{equation*}
and hence
\begin{equation*}
\sum _{i \leqslant N} \tC (i+1, s+1) \cdot V_{i+1} = V_{s+1}.
\end{equation*}
That is, $V$ is generated in degrees $\leqslant N+1$. Consequently, it is finitely generated.
\end{proof}

\begin{remark} \normalfont
From the proof one easily sees that if $\So V$ is finitely generated, then $V$ must be finitely generated as well even if the FGP condition fails. This result actually comes from the existence of a self-embedding functor. Conversely, suppose that $V$ is finitely generated. To show the finite generality of $\So V$, one only needs to assume that every $\So (\tC 1_s)$ is finitely generated, and the projectivity of $\So (\tC 1_s)$ is not required.
\end{remark}

\subsection{Super finitely presented property}

In this subsection we consider super finitely presented modules.

\begin{definition}
A locally finite $\tC$-module $V$ is finitely presented if there is a projective presentation $P^1 \to P^0 \to V \to 0$ such that both $P^1$ and $P^0$ are finitely generated. We say that $V$ is super finitely presente (or $\FPi$) if there is a projective resolution $P^{\bullet} \to V \to 0$ such that every $P^i$ is finitely generated.
\end{definition}

\begin{remark} \label{characterization of fg, fp, and sfp} \textnormal
In the language of homologies, $V$ is finitely generated (resp., finitely presented; $\FPi$) if and only if $H_i (V)$ is finitely generated for $i = 0$ (resp., for $i \leqslant 1$; for $i \in \Z$). Equivalently, $V$ is finitely generated (resp., finitely presented; $\FPi$) if and only if $\hd_i (V) < \infty$ for $i = 0$ (resp, for $i \leqslant 1$; for $i \in \Z$).
\end{remark}

Since $\tC$ might not be locally Noetherian, finitely generated $\tC$-modules in general are not $\FPi$. However, we have the following result.

\begin{proposition} \label{shift keeps sfp}
Let $V$ be a locally finite $\tC$-module. Then $V$ is finitely presented (resp., $\FPi$) if and only if so is $\Sa V$ for a certain $a \in \Z$.
\end{proposition}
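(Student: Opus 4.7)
The plan is to reduce to the case $a = 1$ by iterating (since $\Sa = \So \circ S_{a-1}$), and then handle the shift-by-one step using the exactness of $\So$, the FGP hypothesis, and Lemma \ref{shift keeps fg}. The forward direction is routine: given a finitely generated projective resolution $P^\bullet \to V$ of $V$ (the first two terms for the finitely presented statement; all of it for the $\FPi$ statement), the exact functor $\So$ produces a resolution $\So P^\bullet \to \So V$ whose terms are still finitely generated projectives, by FGP.

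For the backward direction I would build a finitely generated projective resolution of $V$ inductively, maintaining the invariant that the shift of the current syzygy is $\FPi$ (respectively, finitely presented). Set $K_0 = V$, so $\So K_0 = \So V$ is $\FPi$ by hypothesis. Given that $\So K_n$ is $\FPi$, in particular $\So K_n$ is finitely generated, hence Lemma \ref{shift keeps fg} yields that $K_n$ itself is finitely generated, and one may choose a surjection $P^n \to K_n$ from a finitely generated projective $P^n$ with kernel $K_{n+1}$. Applying the exact functor $\So$ gives
\begin{equation*}
0 \to \So K_{n+1} \to \So P^n \to \So K_n \to 0,
\end{equation*}
in which $\So P^n$ is finitely generated projective (hence $\FPi$) and $\So K_n$ is $\FPi$ by hypothesis. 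Closure of $\FPi$ under extensions then shows $\So K_{n+1}$ is $\FPi$, and the recursion continues to yield the desired resolution of $V$ by finitely generated projectives. The finitely presented case requires only the first two steps of this recursion, using the analogous long exact sequence of $\Tor$ in degrees $\leqslant 1$.

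The main obstacle is the extension-closure statement, which is where the possibly non-Noetherian nature of $\tC$ is delicate: without local Noetherianity one cannot freely assert that submodules of finitely generated $\tC$-modules are finitely generated. The saving feature is Remark \ref{discrete structure of homologies}: each $H_i(W)$ is a discrete torsion $\tC$-module, so its value at any object $j$ is a module over the single ring $\tC(j,j)$. Since $\tC(j,j)$ is finitely generated over the Noetherian ring $\mk$, it is itself a Noetherian ring; hence finitely generated $\tC(j,j)$-modules are closed under subobjects and extensions, and combined with the finite support of finitely generated torsion modules this propagates through the long exact sequence of $\Tor$ associated to the displayed sequence to give finite generation of each $H_i(\So K_{n+1})$. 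Assembling these ingredients, together with the homological characterization of finite presentation and $\FPi$ in Remark \ref{characterization of fg, fp, and sfp}, produces the required resolution of $V$ and completes the proof.
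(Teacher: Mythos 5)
Your proof is correct and takes essentially the same route as the paper's: reduce to $a=1$, use exactness of $\So$ and FGP for the forward direction, and for the converse build the resolution syzygy by syzygy via Lemma \ref{shift keeps fg} and the long exact sequence of $\Tor$ applied to $0 \to \So K_{n+1} \to \So P^n \to \So K_n \to 0$. Your elaboration of why the relevant $H_0$ is finitely generated (finite support plus Noetherianity of each $\tC(j,j)$) spells out what the paper leaves implicit in its "this forces $H_0(\So V^1)$ to be finitely generated"; note only that the "closure under extensions" you invoke is really the two-out-of-three property applied to a short exact sequence whose middle term is projective, which is precisely what your long-exact-sequence argument establishes.
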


\begin{proof}
Again, it is enough to show the conclusion for $a = 1$. We only consider $\FPi$ modules since the same technique applies to finitely presented modules.

If $V$ is $\FPi$, then one can find a projective resolution $P^{\bullet} \to V \to 0$ such that every $P^i$ is a finitely generated $\tC$-module. Since $\So$ is exact and $\tC$ has the FGP property, one gets a projective resolution $\So P^{\bullet} \to \So V \to 0$ such that every $\So P^i$ is still finitely generated. In other words, $\So V$ is $\FPi$.

Now suppose that $\So V$ is $\FPi$. In particular, $\So V$ is finitely generated, so is $V$ by Lemma \ref{shift keeps fg}. Therefore, one gets a short exact sequence
\begin{equation*}
0 \to V^1 \to P^0 \to V \to 0
\end{equation*}
where $P^0$ is a finitely generated projective $\tC$-module. We claim that $V^1$ is finitely generated as well. To see this, it is enough to prove the finitely generality of $\So V^1$. We apply $\So$ to the exact sequence to get
\begin{equation*}
0 \to \So V^1 \to \So P^0 \to \So V \to 0,
\end{equation*}
which gives a long exact sequence
\begin{align*}
\ldots \to H_1 (\So V^1) \to H_1 (\So P^0) = 0 \to H_1 (\So V) \to \\
H_0 (\So V^1) \to H_0 (\So P^0) \to H_0 (\So V) \to 0.
\end{align*}
Since $\So V$ is $\FPi$, by Remark \ref{characterization of fg, fp, and sfp}, both $H_0 (\So V)$ and $H_1 (\So V)$ are finitely generated torsion $\tC$-modules. Clearly, $H_0 (\So P^0)$ is a finitely generated torsion $\tC$-module as well. This forces $H_0 (\So V^1)$ to be finitely generated, so is $\So V^1$. Therefore, as claimed, $V^1$ is finitely generated as well. Moreover, since
\begin{equation*}
H_i (\So V^1) \cong H_{i+1} (\So V)
\end{equation*}
for $i \geqslant 1$, $\So V^1$ is $\FPi$. Replacing $V$ by $V^1$ and using the same argument, one gets a short exact sequence
\begin{equation*}
0 \to V^2 \to P^1 \to V^1 \to 0
\end{equation*}
such that every term is finitely generated. Recursively, we can construct a projective resolution $P^{\bullet}$ for $V$ such that every $P^i$ is finitely generated. That is, $V$ is $\FPi$.
\end{proof}

It immediately implies:

\begin{corollary} \label{torsion modules are sfp}
Every finitely generated torsion $\tC$-module is $\FPi$.
\end{corollary}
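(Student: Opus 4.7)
The plan is to combine the boundedness of the support of a finitely generated torsion module with Proposition \ref{shift keeps sfp}, which allows us to transfer the $\FPi$ property between a module and its shift. The strategy is to shift the given torsion module enough times so that it becomes zero, and then pull the (trivial) $\FPi$ property back to the original.

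More concretely, let $V$ be a finitely generated torsion $\tC$-module. By the remark following Definition \ref{torsion modules}, one has $V_i = 0$ for all $i > N$, where $N = \td(V) < \infty$. Since by definition $\Sa V = (\iota^{\ast})^a \circ \tau_a (V)$, the value of $\Sa V$ on an object $s \in \Z$ is determined by $V_{s+a}$ (via the embedding $\iota$). Hence if we choose $a > N$, then $(\Sa V)_s = 0$ for every $s \in \Z$, i.e., $\Sa V = 0$.

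The zero module is trivially $\FPi$ (its projective resolution is the zero complex, each term of which is vacuously finitely generated). Applying Proposition \ref{shift keeps sfp} in the direction that if $\Sa V$ is $\FPi$ for some $a \in \Z$ then so is $V$, we conclude that $V$ itself is $\FPi$.

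There is essentially no obstacle here: the work has already been done in Proposition \ref{shift keeps sfp}, and the only observation needed is that torsion modules are annihilated by sufficiently high shifts, which is immediate from their bounded support. The corollary is really just the clean payoff of the preceding proposition.
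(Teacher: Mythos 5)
Your proof is correct and takes essentially the same approach as the paper: choose $a > \td(V)$ so that $\Sa V = 0$, observe that the zero module is trivially $\FPi$, and invoke Proposition \ref{shift keeps sfp} to conclude $V$ is $\FPi$. The only difference is that you spell out in more detail why the shift annihilates the module, which the paper leaves implicit.
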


\begin{proof}
Let $V$ be a nonzero finitely generated torsion $\tC$-module, and let $a = \td(V) + 1$ which is finite. Then one has $\Sa V = 0$, clearly $\FPi$.
\end{proof}

\subsection{Category of $\FPi$ modules}

Now we consider the category of locally finite $\FPi$ modules, and denote it by $\tC \sfpmod$. Note that $\tC \sfpmod \subseteq \tC \fgmod \subseteq \tC \lfmod$.

The following lemma is well known. For the convenience of the reader, we give a proof using homologies of modules.

\begin{lemma}
Let $0 \to U \to V \to W \to 0$ be a short exact sequence of locally finite $\tC$-modules. If two of them are $\FPi$, so is the third one.
\end{lemma}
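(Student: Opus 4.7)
The plan is to apply $\Tor^{\tC}_{\bullet}(\tC_0, -)$ to the given short exact sequence, producing the long exact sequence
\begin{equation*}
\cdots \to H_{s+1}(W) \to H_s(U) \to H_s(V) \to H_s(W) \to H_{s-1}(U) \to \cdots
\end{equation*}
and then invoking Remark \ref{characterization of fg, fp, and sfp}, which characterizes the $\FPi$ property as finite generation of $H_s$ for every $s \in \Z$. The result will follow as soon as a three-term segment of this sequence exhibits each homology of the unknown module as an extension of finitely generated pieces.

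The key preliminary observation is a Noetherian-type property for torsion discrete $\tC$-modules that is available even though $\tC$ itself need not be locally Noetherian. By Remark \ref{discrete structure of homologies}, every $H_s$ is torsion and discrete; a finitely generated torsion discrete $\tC$-module has finite support and on each object $j$ in that support is finitely generated over the ring $\tC(j,j)$. Since $\tC(j,j)$ is a finitely generated $\mk$-module and $\mk$ is a commutative Noetherian ring, $\tC(j,j)$ is itself Noetherian as a ring. Consequently submodules, quotients, and extensions of finitely generated torsion discrete $\tC$-modules remain finitely generated, so the usual bookkeeping is available inside the long exact sequence.

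The three cases are now symmetric. If $U$ and $V$ are $\FPi$, the segment $H_s(V) \to H_s(W) \to H_{s-1}(U)$ presents $H_s(W)$ as an extension of a submodule of the finitely generated $H_{s-1}(U)$ by a quotient of the finitely generated $H_s(V)$, hence $H_s(W)$ is finitely generated and $W$ is $\FPi$. If $U$ and $W$ are $\FPi$, the segment $H_s(U) \to H_s(V) \to H_s(W)$ handles $V$ in the same fashion; if $V$ and $W$ are $\FPi$, the segment $H_{s+1}(W) \to H_s(U) \to H_s(V)$ handles $U$.

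The only real obstacle is justifying the Noetherian observation above; without it one could not promote finite generation of the outer terms to finite generation of a submodule, and the argument would collapse. Everything else is a routine diagram chase in the long exact sequence of $\Tor$, completely parallel to the homological bookkeeping already used in the proof of Proposition \ref{shift keeps sfp}.
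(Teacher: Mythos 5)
Your proof is correct and takes essentially the same approach as the paper: apply the long exact sequence of $\Tor^{\tC}(\tC_0,-)$ to the short exact sequence and invoke the characterization of $\FPi$ from Remark~\ref{characterization of fg, fp, and sfp}. The paper sidesteps your Noetherian observation by phrasing that characterization as $\hd_s<\infty$ (finite support of $H_s$) rather than finite generation of $H_s$; with supports, closure under subobjects, quotients, and extensions in the long exact sequence is immediate and the Noetherian detour about $\tC(j,j)$ is not needed.
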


\begin{proof}
Applying $\tC_0 \otimes _{\tC} -$ to the short exact sequence one gets the following long exact sequence:
\begin{equation*}
\ldots \to H_2 (W) \to H_1 (U) \to H_1 (V) \to H_1 (W) \to H_0 (U) \to H_0 (V) \to H_0 (W) \to 0.
\end{equation*}
If two of them are $\FPi$, then the homologies of these two modules are only supported on finitely many objects in $\tC$. Therefore, homologies of the third one must be also supported on finitely many objects by the long exact sequence. In other words, every homological degree of this module is finite.
\end{proof}

\begin{definition} \label{almost isomorphic}
Two locally finite $\tC$-modules $U$ and $V$ are almost isomorphic, denoted by $U \sim V$ if there exists a certain $N \in \Z$ such that $\tau_N U \cong \tau_N V$ as $\tC$-modules. This is an equivalence relation.
\end{definition}

Recall that $\tC \sfpmod$ the category of $\FPi$, locally finite $\tC$-modules.

\begin{proposition} \label{equivalent conditions}
The following statements are equivalent:
\begin{enumerate}
\item The category $\tC \sfpmod$ contains all finitely generated torsion modules.
\item The category $\tC \sfpmod$ is closed under the equivalence relation $\sim$. That is, if $U$ and $V$ are locally finite $\tC$-modules such that $U \sim V$, then one is $\FPi$ if and only if so is the other one.
\item A locally finite $\tC$-module $V$ is $\FPi$ if and only if so is a truncation $\tau_i V$ for some $i \in \Z$.
\end{enumerate}
\end{proposition}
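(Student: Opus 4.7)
The plan is to establish the cycle of implications (1) $\Rightarrow$ (3) $\Rightarrow$ (1) and (2) $\Leftrightarrow$ (3), which gives all equivalences. The essential tool throughout is the previous lemma (``two out of three $\FPi$ in a short exact sequence forces the third''), combined with the observation that for a locally finite $V$ and any $n \in \Z$, there is a natural short exact sequence
\begin{equation*}
0 \to \tau_n V \to V \to V/\tau_n V \to 0,
\end{equation*}
where $V/\tau_n V \cong \bigoplus_{i < n} V_i$ is a $\tC$-module supported on only finitely many objects, each value being a finitely generated $\mk$-module by local finiteness. In particular, $V/\tau_n V$ is always a finitely generated torsion $\tC$-module.

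For (1) $\Rightarrow$ (3), I apply the above short exact sequence: by (1), the quotient $V/\tau_n V$ is $\FPi$, so the 2-out-of-3 lemma says $V$ is $\FPi$ if and only if $\tau_n V$ is. For (3) $\Rightarrow$ (1), given any finitely generated torsion module $V$, set $n = \td(V) + 1$ (finite, by Definition \ref{torsion modules} and the remark after it). Then $\tau_n V = 0$, which is trivially $\FPi$, and (3) promotes this to the $\FPi$ property of $V$.

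For (2) $\Leftrightarrow$ (3), first note that $V \sim \tau_i V$ for every $i \in \Z$, since $\tau_N V = \tau_N(\tau_i V)$ whenever $N \geqslant i$. Thus (2) directly yields (3). Conversely, if (3) holds and $U \sim V$ via an isomorphism $\tau_N U \cong \tau_N V$, then $U$ being $\FPi$ is equivalent to $\tau_N U$ being $\FPi$, which is equivalent to $\tau_N V$ being $\FPi$, which is equivalent to $V$ being $\FPi$; this gives (2).

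There is no real obstacle, but the one point that needs care is verifying that $V/\tau_n V$ is genuinely a finitely generated $\tC$-module (not merely a truncated $\mk$-module): this uses the fact that there are no morphisms in $\tC$ from a larger index to a smaller one, so the quotient inherits a well-defined $\tC$-module structure concentrated in degrees $<n$, and local finiteness ensures each of the finitely many values $V_i$ is a finitely generated $\tC(i,i)$-module. Once this is pinned down, all three implications are formal consequences of the 2-out-of-3 lemma.
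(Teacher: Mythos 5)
Your proposal is correct and rests on the same two ingredients as the paper's proof: the two-out-of-three lemma for $\FPi$ in a short exact sequence, and the observation that $0 \to \tau_n V \to V \to V/\tau_n V \to 0$ has quotient a finitely generated torsion module (supported on $\{0,\dots,n-1\}$, hence generated in finite degrees and locally finite). The paper routes $(1)\Rightarrow(2)\Rightarrow(3)\Rightarrow(1)$; you route $(1)\Rightarrow(3)\Rightarrow(1)$ together with $(2)\Leftrightarrow(3)$. This is cosmetically different but buys nothing new, since your $(1)\Rightarrow(3)$ step is literally half of the paper's $(1)\Rightarrow(2)$ argument (applied once to $V$ rather than twice to $U$ and $V$).

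The one place where you should be more careful is the step $(3)\Rightarrow(2)$. You assert that ``$U$ being $\FPi$ is equivalent to $\tau_N U$ being $\FPi$,'' but statement (3), read literally with ``for some $i$,'' only yields the implication $\tau_N U$ $\FPi$ $\Rightarrow U$ $\FPi$ (the converse in (3) is vacuous by taking $i = 0$). The missing direction, $U$ $\FPi$ $\Rightarrow \tau_N U$ $\FPi$, does hold, but it comes from (1) — which you proved is implied by (3) — via the same two-out-of-three argument, or equivalently from the ``for all $n$'' biconditional that your proof of $(1)\Rightarrow(3)$ actually establishes. As written, this dependence is left implicit; spelling it out (or simply proving $(1)\Rightarrow(2)$ directly as in the paper, which sidesteps $(3)\Rightarrow(2)$ entirely) would close the loop cleanly.
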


\begin{proof}
$(1) \Rightarrow (2)$: Suppose that $U$ is $\FPi$. Since $U \sim V$, there is a certain $N \in \Z$ such that $\tau_N U \cong \tau_N V$. Now consider the exact sequence
\begin{equation*}
0 \to \tau_N U \to U \to \overline{U} \to 0.
\end{equation*}
Clearly, $\overline{U}$ is a finitely generated torsion module, and hence is $\FPi$ by the assumption. By the previous lemma, $\tau_N U$ is $\FPi$ as well.

Now in the exact sequence
\begin{equation*}
0 \to \tau_N V \to V \to \overline{V} \to 0
\end{equation*}
$\tau_N V \cong \tau_N U$, and $\overline{V}$ is a finitely generated torsion module. Since they both are $\FPi$, so is $V$ again by the previous lemma.

$(2) \Rightarrow (3)$: Notes that $V \sim \tau_i V$.

$(3) \Rightarrow (1)$: If $V$ is a finitely generated torsion module, then for a large enough $i$, one has $\tau_i V = 0$, which is clearly contained in $\tC \sfpmod$. Therefore, $V$ is contained in $\tC \sfpmod$ as well.
\end{proof}

\begin{remark} \normalfont
Actually, the previous proposition holds for an arbitrary locally finite $\mk$-linear category $\tC$ even if it does not have a self-embedding functor.
\end{remark}

Now we collect main results in this section in the following theorem.

\begin{theorem}
A locally finite $\tC$-module is $\FPi$ if and only if so is $\So V$. In particular, the following equivalent conditions hold:
\begin{enumerate}
\item The category $\tC \sfpmod$ contains all finitely generated torsion modules.
\item The category $\tC \sfpmod$ is closed under the equivalence relation $\sim$.
\item A locally finite $\tC$-module $V$ is $\FPi$ if and only if so is a truncation $\tau_i V$ for some $i \in \Z$.
\end{enumerate}
\end{theorem}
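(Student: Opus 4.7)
The plan is to observe that the final theorem is a consolidation of results already established in the section, so the proof amounts to an assembly rather than new work. First I would take the biconditional ``$V$ is $\FPi$ if and only if so is $\So V$'' as nothing more than the $a=1$ case of Proposition \ref{shift keeps sfp}; indeed, that proposition was proved by induction on $a$, with the base case $a=1$ being the substantive one. So invoking it yields the first sentence immediately.

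Next I would handle condition (1): every finitely generated torsion $\tC$-module $V$ is $\FPi$. This is precisely Corollary \ref{torsion modules are sfp}, whose proof is a one-line application of the biconditional: setting $a = \td(V)+1$, one has $\Sa V = 0$, and zero is trivially $\FPi$, so Proposition \ref{shift keeps sfp} (iterated, or the $a=1$ version applied $\td(V)+1$ times) forces $V$ to be $\FPi$.

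For the equivalence of (1), (2), and (3), I would simply cite Proposition \ref{equivalent conditions}, whose proof is already given in full and, as noted in the remark following it, does not even depend on the existence of a self-embedding functor. The three implications there are: (1)$\Rightarrow$(2) uses that the category of $\FPi$ modules is closed under short exact sequences (the two-out-of-three lemma above Definition \ref{almost isomorphic}) applied to $0 \to \tau_N U \to U \to \overline{U} \to 0$ with $\overline{U}$ torsion; (2)$\Rightarrow$(3) is the observation $V \sim \tau_i V$; and (3)$\Rightarrow$(1) is immediate by taking $i$ larger than $\td(V)$.

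The only genuine obstacle in this whole chain of reasoning was handled earlier, in the harder direction of Proposition \ref{shift keeps sfp}: showing that if $\So V$ is $\FPi$ then so is $V$. That argument required building a projective resolution of $V$ one step at a time, using Lemma \ref{shift keeps fg} together with the long exact sequence in homology obtained by applying $\So$ to $0 \to V^1 \to P^0 \to V \to 0$, and exploiting the fact that $H_i(\So V^1) \cong H_{i+1}(\So V)$ for $i \geqslant 1$ to iterate. Once that is in hand, the final theorem is a formal packaging of Proposition \ref{shift keeps sfp}, Corollary \ref{torsion modules are sfp}, and Proposition \ref{equivalent conditions}, and the proof reduces to citing these three results in order.
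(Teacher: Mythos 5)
Your proposal is correct and follows exactly the paper's own proof, which simply cites Proposition \ref{shift keeps sfp}, Corollary \ref{torsion modules are sfp}, and Proposition \ref{equivalent conditions} in sequence. Your elaboration of what each ingredient contributes is accurate but adds nothing beyond unpacking those three already-proved results.
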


\begin{proof}
The conclusion follows from Proposition \ref{shift keeps sfp}, Corollary \ref{torsion modules are sfp}, and Proposition \ref{equivalent conditions}.
\end{proof}

\section{Upper bounds of homological degrees}

As before, let $\tC$ be a locally finite $\mk$-linear category of type $\Ai$ equipped with a self-embedding functor $\iota$ of degree 1. Let $\So$ be the shift functor induced by $\iota$. In the previous section we have shown that many interesting $\tC$-modules including finitely generated torsion modules are $\FPi$ provided that $\So$ preserves finitely generated projective $\tC$-modules (the FGP condition). Therefore, we may try to compute their homological degrees. In general it is very difficult to get an explicit answer for this question. However, for many combinatorial categories appearing in representation stability theory, their self-embedding functors and the induced shift functors have extra interesting properties, allowing us to get upper bounds for homological degrees of finitely generated torsion modules.

\subsection{Genetic functors}

\emph{Genetic functors} were firstly introduced and studied in \cite{GL2}, where they were used to show the Koszulity of many categories in representation stability theory.

\begin{definition} \label{genetic functors}
A self-embedding functor $\iota: \tC \to \tC$ of degree 1 is a genetic functor if the corresponding shift functor $\So$ satisfies the FGP condition, and moreover $\gd (\So (\tC 1_s)) \leqslant s$ for $s \in \Z$.
\end{definition}

\begin{remark} \normalfont
The first example of categories equipped with genetic functors is the category $\FI$ observed by Church, Ellenberg, and Farb in \cite{CEF}. They explicitly constructed the shift functor and proved that it has the property specified in the above definition, but did not mention that this shift functor is induced by a genetic functor.
\end{remark}

From the definition, one immediately observes that $\So (\tC 1_s) = P_{s-1} \oplus P_s$, where $P_{s-1}$ and $P_s$ are finitely generated projective $\tC$-modules generated in degree $s-1$ and degree $s$ respectively; here for $s = 0$ we let $P_{-1} = 0$.

\subsection{A recursive technique}

The importance of genetic functors and their induced shift functors, is that they give us a recursive way to compute homological degrees. Firstly we consider the zeroth homological degree, and strengthen the conclusion of Lemma \ref{shift keeps fg} as follows.

\begin{lemma} \label{compare h0}
Let $V$ be a finitely generated $\tC$-module. Then one has:
\begin{equation*}
\gd (\So V) \leqslant \gd (V) \leqslant \gd (\So V) + 1.
\end{equation*}
\end{lemma}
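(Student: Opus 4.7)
The plan is to prove the two inequalities separately, and both will follow from arguments already implicit in earlier parts of the paper, tightened by the genetic functor hypothesis.

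For the first inequality $\gd(\So V) \leqslant \gd(V)$, I would start by choosing a surjection $P = \bigoplus_{i \leqslant N}(\tC 1_i)^{\oplus a_i} \twoheadrightarrow V$ witnessing $\gd(V) = N$, so that $\sum_i a_i < \infty$. Since $\So$ is exact and the genetic hypothesis guarantees that each $\So(\tC 1_i)$ is a finitely generated projective of the form $P_{i-1} \oplus P_i$ generated in degrees $\leqslant i$, applying $\So$ yields a surjection $\So P \twoheadrightarrow \So V$ with $\So P$ generated in degrees $\leqslant N$. Monotonicity of $\gd$ under quotients (the first half of Lemma~\ref{compare gd}) then gives $\gd(\So V) \leqslant \gd(\So P) \leqslant N$.

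For the second inequality $\gd(V) \leqslant \gd(\So V) + 1$, I would essentially repeat the degree-tracking calculation from the second half of the proof of Lemma~\ref{shift keeps fg}, keeping track of the precise bound. Set $N = \gd(\So V)$, so that $\So V$ is generated in degrees $\leqslant N$; equivalently, for every $s \geqslant N$,
\begin{equation*}
\sum_{i \leqslant N} \tC(i,s) \cdot (\So V)_i = (\So V)_s.
\end{equation*}
Rewriting via $(\So V)_i = V_{i+1}$ and the identification of $\tC(i,s)$ with $\iota(\tC(i,s)) \subseteq \tC(i+1,s+1)$, I obtain
\begin{equation*}
\sum_{i \leqslant N} \tC(i+1,s+1) \cdot V_{i+1} \supseteq V_{s+1},
\end{equation*}
so each $V_{s+1}$ with $s+1 \geqslant N+1$ lies in the submodule of $V$ generated by $\bigoplus_{j \leqslant N+1} V_j$. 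Since the values in degrees $\leqslant N+1$ are trivially contained in that subset, $V$ is generated in degrees $\leqslant N+1$, i.e.\ $\gd(V) \leqslant N+1 = \gd(\So V)+1$.

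Neither step should present a serious obstacle: the first is purely formal given the genetic property $\gd(\So(\tC 1_s)) \leqslant s$, and the second is a verbatim reuse of the argument already carried out in Lemma~\ref{shift keeps fg}. The only subtle point to double-check is the edge case where $V$ (or $\So V$) is zero or generated in very low degrees, so that $\gd$ takes the value $-\infty$; in those degenerate situations both inequalities hold trivially, and the argument above should be phrased to handle them (for instance by treating $V = 0$ separately before choosing the minimal generating set).
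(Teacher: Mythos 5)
Your proposal is correct and follows essentially the same route as the paper: the first inequality via applying $\So$ to a minimal projective surjection and invoking the genetic property $\gd(\So(\tC 1_i)) \leqslant i$, and the second inequality by re-running the degree-tracking computation from the proof of Lemma~\ref{shift keeps fg} with the explicit bound $N = \gd(\So V)$ kept visible.
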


\begin{proof}
The second inequality has already been established in the proof of Lemma \ref{shift keeps fg}. To show the first one, we notice that if $V$ is generated in degrees $\leqslant n$, then there is a surjection $P \to V$ such that $P$ is a finitely generated projective $\tC$-module generated in degrees $\leqslant n$. Applying $\So$ we get a surjection $\So P \to \So V$. The property of genetic functors tells us that $\So P$ is still generated in degrees $\leqslant n$.
\end{proof}

Another technical lemma is:

\begin{lemma}
Let $0 \to W \to P \to V \to 0$ be a short exact sequence of locally finite $\tC$-modules. Suppose that $P$ is projective and $\gd(P) = \gd(V)$. Then
\begin{equation*}
\hd_1(V) \leqslant \gd(W) \leqslant \max \{ \gd(V), \, \hd_1(V) \}.
\end{equation*}
\end{lemma}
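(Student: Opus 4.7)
The plan is to derive both inequalities from the long exact sequence in $\Tor$ associated with the given short exact sequence, using the projectivity of $P$ to kill higher homologies and Lemma \ref{compare td} to compare torsion degrees across short exact sequences.

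First I would apply $\tC_0 \otimes_{\tC} -$ to $0 \to W \to P \to V \to 0$. Since $P$ is projective we have $H_s(P) = 0$ for $s \geqslant 1$, and the long exact sequence collapses to
\begin{equation*}
0 \to H_1(V) \to H_0(W) \to H_0(P) \to H_0(V) \to 0.
\end{equation*}
Let $K$ denote the image of $H_0(W) \to H_0(P)$, equivalently the kernel of $H_0(P) \to H_0(V)$. This produces a short exact sequence
\begin{equation*}
0 \to H_1(V) \to H_0(W) \to K \to 0.
\end{equation*}

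For the first inequality, the injection $H_1(V) \hookrightarrow H_0(W)$ together with Lemma \ref{compare td} yields
\begin{equation*}
\hd_1(V) = \td(H_1(V)) \leqslant \td(H_0(W)) = \gd(W).
\end{equation*}
For the second inequality, Lemma \ref{compare td} applied to the same short exact sequence gives
\begin{equation*}
\gd(W) = \td(H_0(W)) \leqslant \max\{\td(H_1(V)),\, \td(K)\} = \max\{\hd_1(V),\, \td(K)\}.
\end{equation*}
Here is where the hypothesis $\gd(P) = \gd(V)$ enters: since $K$ is a submodule of the discrete module $H_0(P)$, we have $\td(K) \leqslant \td(H_0(P)) = \gd(P) = \gd(V)$, so $\max\{\hd_1(V),\, \td(K)\} \leqslant \max\{\hd_1(V),\, \gd(V)\}$, which is the claim.

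I expect no real obstacle here; the only subtle point is noticing that the hypothesis $\gd(P) = \gd(V)$ is used precisely to bound $\td(K)$ (without it, $K$ could a priori live in degrees higher than $\gd(V)$, though since $H_0(P) \to H_0(V)$ is surjective and both sides have the same top degree, the actual content is simply $K \subseteq H_0(P)$ with top degree $\gd(V)$). Everything else is a standard application of the long exact sequence of $\Tor$ and Lemma \ref{compare td}.
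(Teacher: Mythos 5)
Your proof is correct and takes essentially the same route as the paper's: apply $\tC_0 \otimes_{\tC} -$ to get the four-term exact sequence (with higher $\Tor$ vanishing by projectivity of $P$), then split it at the middle map and apply Lemma~\ref{compare td} to each piece. The paper just writes ``the conclusion follows from Lemma~\ref{compare td}'' without introducing the intermediate module $K$; your version makes that implicit splitting explicit, and correctly identifies where the hypothesis $\gd(P)=\gd(V)$ is used.
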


\begin{proof}
Applying $\tC_0 \otimes _{\tC} -$ to the exact sequence one obtains
\begin{equation*}
0 \to H_1(V) \to H_0(W) \to H_0(P) \to H_0(V) \to 0.
\end{equation*}
Now the conclusion follows from Lemma \ref{compare td}.
\end{proof}

Now let us compare homological degrees of $\FPi$ modules to those of shifted modules.

\begin{proposition} \label{hd under shift}
Let $V$ be a locally finite $\tC$-module.
\begin{enumerate}
\item If $\So V$ is finitely presented, then one has
\begin{equation*}
\hd_1 (V) \leqslant \max \{ \hd_0 (V) + 1, \, \hd_1 (\So V) + 1 \}.
\end{equation*}

\item If $\So V$ is $\FPi$, then one has
\begin{equation*}
\hd_s (V) \leqslant \max \{\hd_0(V) + 1, \, \ldots, \, \hd_{s-1} (V) + 1, \, \hd_s (\So V) + 1 \}
\end{equation*}
for $s \geqslant 0$.
\end{enumerate}
\end{proposition}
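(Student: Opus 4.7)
The strategy is to resolve $V$ by a short exact sequence
\[
0 \to W \to P \to V \to 0
\]
in which $P$ is a finitely generated projective $\tC$-module chosen so that $\gd(P) = \gd(V)$, then apply the exact functor $\So$ to obtain a parallel exact sequence for the shifted modules, and finally induct on the homological index $s$. The base case $s = 0$ of part (2) is immediate: the right-hand inequality of Lemma \ref{compare h0} gives $\hd_0(V) \leqslant \hd_0(\So V) + 1$, which is precisely the required bound.

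For part (1), the preceding lemma gives $\hd_1(V) \leqslant \gd(W)$, and Lemma \ref{compare h0} then gives $\gd(W) \leqslant \gd(\So W) + 1$, so it suffices to estimate $\gd(\So W)$. Applying $\So$ yields the short exact sequence $0 \to \So W \to \So P \to \So V \to 0$; the FGP condition guarantees that $\So P$ is projective, so the associated long exact sequence of Tor opens as
\[
0 \to H_1(\So V) \to H_0(\So W) \to H_0(\So P) \to H_0(\So V) \to 0.
\]
Lemma \ref{compare td} extracts from this the estimate $\gd(\So W) = \td(H_0(\So W)) \leqslant \max\{\gd(\So P), \hd_1(\So V)\}$. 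The genetic functor hypothesis is exactly what I need at this point: it forces $\gd(\So P) \leqslant \gd(P) = \hd_0(V)$, so combining the inequalities produces $\hd_1(V) \leqslant \max\{\hd_0(V) + 1, \hd_1(\So V) + 1\}$.

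For part (2) with $s \geqslant 2$, I would induct on $s$. Since $\So V$ is $\FPi$ and $\So P$ is finitely generated projective (hence $\FPi$), the two-out-of-three property established earlier forces $\So W$ to be $\FPi$ as well, so the inductive hypothesis is available for $W$. Because both $P$ and $\So P$ are projective, dimension shifting gives $\hd_s(V) = \hd_{s-1}(W)$ and $\hd_s(\So V) = \hd_{s-1}(\So W)$. Feeding these identifications into the inductive bound for $\hd_{s-1}(W)$, and using the preceding lemma's inequality $\hd_0(W) = \gd(W) \leqslant \max\{\hd_0(V), \hd_1(V)\}$ to translate the $\hd_0(W) + 1$ term, I obtain exactly
\[
\hd_s(V) \leqslant \max\{\hd_0(V) + 1, \, \hd_1(V) + 1, \, \ldots, \, \hd_{s-1}(V) + 1, \, \hd_s(\So V) + 1\}.
\]

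The main obstacle is indexing bookkeeping: I must check that $\hd_0(W) + 1$ is absorbed into $\max\{\hd_0(V) + 1, \hd_1(V) + 1\}$ and that the shifted homological degrees of $W$ line up correctly with those of $V$, so that every term $\hd_i(V) + 1$ for $0 \leqslant i \leqslant s-1$ on the right-hand side is genuinely accounted for. Beyond this, the conceptual subtlety is that the argument uses the full strength of the genetic functor hypothesis — rather than the weaker FGP condition — precisely through the inequality $\gd(\So P) \leqslant \gd(P)$, without which the bookkeeping in part (1) collapses.
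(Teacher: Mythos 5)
Your proposal is correct and follows essentially the same route as the paper: resolve $V$ by a finitely generated projective $P$ with $\gd(P) = \gd(V)$, apply $\So$ to the presentation, compare the two long exact Tor sequences via Lemmas \ref{compare td} and \ref{compare h0}, and use the genetic-functor bound $\gd(\So P) \leqslant \gd(P)$ at the key step, then induct on $s$ by dimension shifting to $W = V^1$. Your explicit appeal to $H_s(V) \cong H_{s-1}(W)$ for $s \geqslant 2$ and to the two-out-of-three property for $\FPi$ makes the ``follows from recursion'' step of the paper slightly more transparent, but the argument is the same.
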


Here we set $\hd _{-1} (V) = 0$.

\begin{proof}
The conclusion holds trivially for $V = 0$, so we suppose that $V \neq 0$. Since $\So V$ is finitely presented, there is a short exact sequence
\begin{equation*}
0 \to V^1 \to P \to V \to 0
\end{equation*}
where $P$ is a finitely generated projective $\tC$-module. Clearly, one can assume that $\hd_0 (P) = \hd_0 (V)$. Applying $\So$ to it one has
\begin{equation*}
0 \to \So V^1 \to \So P \to \So V \to 0.
\end{equation*}
Note that $\So P$ is still projective. Applying $\tC_0 \otimes _{\tC} -$ to these two sequences we get
\begin{equation*}
0 \to H_1(V) \to H_0(V^1) \to H_0(P) \to H_0(V) \to 0
\end{equation*}
and
\begin{equation*}
0 \to H_1 (\So V) \to H_0 (\So V^1) \to H_0 (\So P) \to H_0 (\So V) \to 0.
\end{equation*}
Therefore
\begin{align*}
& \hd_1 (V) = \td (H_1(V)) \leqslant \td (H_0 (V^1)) & \text{ by Lemma \ref{compare td}} \\
& = \gd (V^1) \leqslant \gd (\So V^1) + 1 & \text{ by Lemma \ref{compare h0}}\\
& = \td (H_0(\So V^1)) + 1\\
& \leqslant \max \{ \td (H_1 (\So V)) + 1, \, \td(H_0 (\So P)) + 1 \} & \text{ by Lemma \ref{compare td}}\\
& = \max \{\hd_1 (\So V) + 1, \, \gd( \So P) + 1 \} \\
& \leqslant \max \{\hd_1 (\So V) + 1, \, \gd(P) + 1 \} & \text{ by Lemma \ref{compare h0}}\\
& = \max \{ \hd_1 (\So V) + 1, \, \hd_0 (V) + 1 \},
\end{align*}
as claimed by Statement (1).

One can use recursion to prove the second statement. Replacing $V$ by $V^1$ (which is also super finitely generated) and using the same argument, one deduces that
\begin{equation*}
\hd_1 (V^1) \leqslant \max \{ \hd_0 (V^1) + 1, \, \hd_1 (\So V^1) + 1 \}.
\end{equation*}
Note that $\hd_1 (\So V^1) = \hd_2 (\So V)$, and by the previous lemma,
\begin{equation*}
\hd_0 (V^1) + 1 \leqslant \max \{ \hd_0(V) + 1, \, \hd_1(V) + 1 \}.
\end{equation*}
Putting these two inequalities together, one deduces that
\begin{equation*}
\hd_2 (V) = \hd_1(V^1) \leqslant \max \{ \hd_0(V) + 1, \, \hd_1 (V) + 1, \, \hd_2 (\So V) + 1 \}.
\end{equation*}
The conclusion follows from recursion.
\end{proof}

\subsection{Castelnuovo-Mumford regularity under shift functors}

It is often the case that we need to apply $\So$ several times to a given $\tC$-module. The following proposition, deduced by extensively using Proposition \ref{shift keeps sfp}, plays a key role for estimating upper bounds for homological degrees of many interesting modules including torsion modules.

\begin{proposition} \label{crucial proposition}
Let $V$ be a $\FPi$ $\tC$-module. Suppose that there exists a certain $a \in \Z$ such that
\begin{equation} \label{given condition}
\hd_s (\Sa V) \leqslant \hd_0 (\Sa V) + s.
\end{equation}
for $s \geqslant 0$. Then for $s \geqslant 0$ one also has
\begin{equation*}
\hd_s (V) \leqslant \hd_0 (V) + s + a.
\end{equation*}
\end{proposition}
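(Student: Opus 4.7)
The plan is to prove the proposition by induction on $a$, with the base case $a = 0$ being exactly the hypothesis. The key tool for the inductive step is the recursive inequality
\begin{equation*}
\hd_s (V) \leqslant \max \{ \hd_0(V)+1,\, \hd_1(V)+1,\, \ldots,\, \hd_{s-1}(V)+1,\, \hd_s (\So V) + 1 \}
\end{equation*}
supplied by Proposition \ref{hd under shift}(2), together with the monotonicity $\gd(\So V) \leqslant \gd(V)$ from Lemma \ref{compare h0}.

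For the inductive step, fix $a \geqslant 1$ and suppose the proposition holds for $a - 1$ applied to any $\FPi$ module. First I would pass from $V$ to $\So V$: the module $\So V$ is still $\FPi$ by Proposition \ref{shift keeps sfp}, and since $S_{a-1}(\So V) = \Sa V$, the module $\So V$ inherits the hypothesis (\ref{given condition}) with the parameter $a - 1$ in place of $a$. Hence by the outer inductive hypothesis applied to $\So V$, we have
\begin{equation*}
\hd_s (\So V) \leqslant \hd_0(\So V) + s + (a - 1) \leqslant \hd_0(V) + s + a - 1
\end{equation*}
for every $s \geqslant 0$, where the last inequality uses Lemma \ref{compare h0}.

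With this bound on $\hd_s(\So V)$ in hand, I would then carry out an inner induction on $s$ to prove $\hd_s(V) \leqslant \hd_0(V) + s + a$. The base $s = 0$ is trivial because $a \geqslant 0$. For the inductive step, feed the inner inductive hypothesis $\hd_j(V) \leqslant \hd_0(V) + j + a$ for $j < s$ together with the bound on $\hd_s(\So V)$ into the recursive inequality from Proposition \ref{hd under shift}(2): each term $\hd_j(V) + 1 \leqslant \hd_0(V) + j + a + 1 \leqslant \hd_0(V) + s + a$ for $j \leqslant s - 1$, and the final term $\hd_s(\So V) + 1 \leqslant \hd_0(V) + s + a$ by the estimate above. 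Taking the maximum yields the desired bound.

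The main conceptual point, rather than an obstacle, is recognizing that one must apply the proposition itself recursively to $\So V$ (with smaller shift parameter $a - 1$) in order to get global control over all homological degrees of $\So V$, not merely a bound on $\hd_s(\So V)$ for a single $s$; this is what makes the double induction close. All other ingredients are contained in Proposition \ref{hd under shift}, Lemma \ref{compare h0}, and Proposition \ref{shift keeps sfp}.
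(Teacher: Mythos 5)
Your proof is correct, and it relies on the same ingredients as the paper's proof — Proposition \ref{hd under shift}(2), Lemma \ref{compare h0}, Proposition \ref{shift keeps sfp}, and the observation that $\So V$ satisfies hypothesis (\ref{given condition}) with shift parameter $a-1$ since $S_{a-1}(\So V) = \Sa V$ — but it reorganizes the double induction. The paper runs an induction on $s$: within the step for $s = n+1$ it unwinds a chain of applications of Proposition \ref{hd under shift}(2) to $\So V$, $S_2 V$, and so on up to $\Sa V$, invoking an ``induction hypothesis on $\So V$'' whose precise quantification (over modules and shift parameters, not just over $s$) is left implicit, and ending with ``one can repeat the above argument recursively.'' You instead make the outer induction be on $a$ (the base case $a = 0$ is literally the hypothesis), apply that inductive hypothesis to $\So V$ once to obtain the uniform estimate $\hd_s(\So V) \leqslant \hd_0(V) + s + (a-1)$ for every $s \geqslant 0$, and then close with a short inner induction on $s$ using Proposition \ref{hd under shift}(2). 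This is logically the same argument, but your nesting is explicit and the quantifiers are in the right place, so there is nothing informal to unwind; it is the cleaner way to present the proposition.
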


\begin{proof}
We use induction on $s$. The conclusion for $s = 0$ holds trivially. Now suppose that the conclusion is true for all $s$ which is at most $n \in \Z$, and let us consider $s = n + 1$. One has
\begin{equation*}
\hd_{n+1} (V) \leqslant \max \{\hd_0(V) + 1, \, \ldots, \, \hd_n (V) +1, \, \hd_{n+1} (\So V) + 1 \}
\end{equation*}
by Proposition \ref{hd under shift}. By the induction hypothesis, for $0 \leqslant i \leqslant n$, one has
\begin{equation*}
\hd_i (V) + 1 \leqslant \hd_0(V) + a + i + 1 \leqslant \hd_0(V) + a + n + 1 = \hd_0(V) + a + s.
\end{equation*}
Therefore, it suffices to show that
\begin{equation} \label{first inequality}
\hd_{n+1} (\So V) \leqslant \hd_0 (V) + a + n.
\end{equation}

If $a = 1$, letting $s = n+1$ in the given inequality (\ref{given condition}) one has
\begin{equation*}
\hd_{n+1} (\So V) \leqslant \hd_0 (\So V) + n + 1 \leqslant \hd_0(V) + n + 1
\end{equation*}
by Lemma \ref{compare h0}, which is exactly what we want. Otherwise, note that $\So V$ also satisfies the inequality (\ref{given condition}) (replacing $a$ by $a - 1$ and $V$ by $\So V$). Applying (2) of Proposition \ref{hd under shift} to $\So V$ rather than $V$ one has:
\begin{equation*}
\hd_{n+1} (\So V) \leqslant \max \{\hd_0 (\So V) +1, \, \ldots, \, \hd_n (\So V) +1, \, \hd_{n+1} (S_2 V) + 1 \}
\end{equation*}
by Proposition \ref{hd under shift}. By the induction hypothesis on $\So V$, for $0 \leqslant i \leqslant n$, one has
\begin{equation*}
\hd_i (\So V) + 1 \leqslant \hd_0(\So V) + (a - 1) + i + 1 = \hd_0( \So V) + a + i \leqslant \hd_0(V) + a + n.
\end{equation*}
Therefore, to prove inequality (\ref{first inequality}), it suffices to show that
\begin{equation}
\hd_{n+1} (S_2 V) \leqslant \hd_0 (V) + a + n - 1.
\end{equation}

One can repeat the above argument recursively, and finally it suffices to verify
\begin{equation*}
\hd_{n+1} (\Sa V) \leqslant \hd_0 (V) + n + 1.
\end{equation*}
But this is implies by inequality (\ref{given condition}) since one always has $\hd_0 (\Sa V) \leqslant \hd_0(V)$.

We have proved the wanted inequality for $s = n + 1$ recursively. The conclusion then follows from induction.
\end{proof}

\begin{remark} \normalfont
The above proposition tells us that if there exists a certain $a \in \Z$ such that $\Sa V$ has Castelnuovo-Mumford regularity bounded by $\hd_0 (\Sa V)$, then $V$ has Castelnuovo-Mumford regularity bounded by $\hd_0 (V) + a$. In a forthcoming paper we will show that inequality (\ref{given condition}) is satisfied for every finitely generated $\FI$-module.
\end{remark}

\subsection{Homological degrees of torsion modules}

Now we consider homological degrees of torsion modules.

\begin{theorem} \label{hd of torsion modules}
If $V$ is a finitely generated torsion $\C$-module, then for $s \in \Z$, one has
\begin{equation*}
\hd_s(V) \leqslant \td(V) + s.
\end{equation*}
\end{theorem}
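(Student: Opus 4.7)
The plan is to use a double induction driven by Proposition \ref{hd under shift}(2). First I would observe that $V$ is $\FPi$ by Corollary \ref{torsion modules are sfp}, so all its homological degrees are finite and Proposition \ref{hd under shift}(2) legitimately applies to $V$ and its shifts. Note that trying to invoke Proposition \ref{crucial proposition} directly with $a = \td(V) + 1$ (so that $\Sa V = 0$) would only yield $\hd_s(V) \leqslant \gd(V) + \td(V) + 1 + s$, which is strictly weaker than the desired bound; so a finer argument tailored to torsion modules is required.

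The central observation is that $\So V$ is again a finitely generated torsion $\tC$-module with $\td(\So V) \leqslant \td(V) - 1$: since $(\So V)_s = V_{s+1}$, vanishing of $V_i$ for $i > \td(V)$ forces $(\So V)_s = 0$ whenever $s > \td(V) - 1$. Finite generation and the $\FPi$ property transfer to $\So V$ via Lemma \ref{shift keeps fg} and Proposition \ref{shift keeps sfp} respectively. I would then induct on $n = \td(V)$, with the base case $V = 0$ being trivial. Assuming the theorem for all finitely generated torsion modules of torsion degree at most $n - 1$, this gives $\hd_s(\So V) \leqslant (n-1) + s$ for every $s \geqslant 0$.

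The inductive step for $V$ with $\td(V) = n$ uses a nested induction on $s$. The base case $s = 0$ is $\hd_0(V) = \gd(V) \leqslant \td(V) = n$, which holds because any torsion module is generated in degrees $\leqslant \td(V)$. For $s \geqslant 1$, Proposition \ref{hd under shift}(2) provides
\[
\hd_s(V) \leqslant \max\{\hd_0(V) + 1, \, \ldots, \, \hd_{s-1}(V) + 1, \, \hd_s(\So V) + 1\};
\]
the inner induction bounds $\hd_i(V) + 1 \leqslant n + i + 1 \leqslant n + s$ for $0 \leqslant i \leqslant s - 1$, while the outer induction bounds the final term by $(n-1) + s + 1 = n + s$. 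Thus $\hd_s(V) \leqslant n + s = \td(V) + s$, closing both inductions. No serious obstacle arises: the substantive content has been packaged into Proposition \ref{hd under shift}(2), and the only bookkeeping needed is to confirm that $\So$ strictly decreases the torsion degree so that the outer recursion terminates in finitely many steps.
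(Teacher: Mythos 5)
Your proof is correct and follows essentially the same route as the paper's: an outer induction on $\td(V)$ feeding into Proposition \ref{hd under shift}(2), paired with an inner recursion on $s$. The only cosmetic difference is that you start the outer induction at $V=0$ rather than at $\td(V)=0$ as the paper does, and you helpfully point out why a direct appeal to Proposition \ref{crucial proposition} is insufficient, but the substance is identical.
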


\begin{proof}
The conclusion trivially holds for $s = 0$ since we always have $\hd_0(V) \leqslant \td(V)$ for torsion modules. So we let $s \geqslant 1$. We use induction on $\td(V)$. Firstly, let us consider $\td(V) = 0$. Then $\Sigma V = 0$, and hence $\hd_s (\Sigma V) = - \infty$ for all $s \in \Z$. By Proposition \ref{hd under shift}, one has
\begin{equation*}
\hd_s (V) \leqslant \max \{\hd_0(V) + 1, \, \ldots , \, \hd_{s-1} (V) + 1, \, \hd_s (\Sigma V) + 1 \} =  \max \{\hd_0(V) + 1, \, \ldots , \, \hd_{s-1} (V) + 1\}.
\end{equation*}
Using this recursive formula, one easily sees that
\begin{equation*}
\hd_s (V) \leqslant s = s + \td(V)
\end{equation*}
for $s \in \Z$.

Now suppose that the conclusion holds for all torsion modules with torsion degrees at most $n$, and let $V$ be a torsion module such that $\td(V) = n + 1$. Clearly, $\td (\Sigma V) = n$. Therefore, by the induction hypothesis,
\begin{equation*}
\hd_s (\Sigma V) \leqslant s + \td (\Sigma V) = s + n
\end{equation*}
for $s \in \Z$. Consequently, Proposition \ref{hd under shift} tells us that
\begin{equation*}
\hd_s (V) \leqslant \max \{\hd_0(V) + 1, \, \ldots, \, \hd_{s-1} (V) + 1, \, s + n + 1 \}.
\end{equation*}
Recursively, we verify
\begin{align*}
& \hd_1 (V) \leqslant \{\hd_0(V) + 1, \, 1 + n + 1 \} = n + 2;\\
& \hd_2 (V) \leqslant \{\hd_0(V) + 1, \, \hd_1(V) + 1, \, 2 + n + 1 \} = n + 3
\end{align*}
and so on. That is, for $s \geqslant 0$,
\begin{equation*}
\hd_s (V) \leqslant s + n + 1 = s + \td(V)
\end{equation*}
as claimed. The conclusion follows from induction.
\end{proof}

\begin{remark} \normalfont
These upper bounds for homological degrees of finitely generated torsion modules are not optimal; See a detailed discussion in Example \ref{example}.
\end{remark}

The above theorem applies to many other $\FPi$ modules such as truncations of projective modules.

\begin{corollary}
Let $P$ be a finitely generated projective $\tC$-module and $n$ be a nonnegative integer. Then
\begin{equation*}
\hd_s (\tau_n P) \leqslant n + s.
\end{equation*}
for $s \geqslant 1$.
\end{corollary}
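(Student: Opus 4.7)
The plan is to reduce the statement to the bound for finitely generated torsion modules (Theorem \ref{hd of torsion modules}) by means of the short exact sequence
\begin{equation*}
0 \to \tau_n P \to P \to P / \tau_n P \to 0.
\end{equation*}
The quotient $P / \tau_n P \cong \bigoplus_{i < n} P_i$ is concentrated in degrees strictly less than $n$; since $P$ is finitely generated, so is every $P_i$, hence $P / \tau_n P$ is a finitely generated torsion $\tC$-module with $\td(P / \tau_n P) \leqslant n - 1$ (if the quotient is zero, then $\tau_n P = P$ and the desired bound holds trivially because $\hd_s(P) = -\infty$ for $s \geqslant 1$).

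Next I would feed the above short exact sequence into the long exact homology sequence obtained by applying $\tC_0 \otimes_{\tC}^{\mathbf{L}} -$:
\begin{equation*}
\cdots \to H_{s+1}(P) \to H_{s+1}(P / \tau_n P) \to H_s(\tau_n P) \to H_s(P) \to \cdots
\end{equation*}
Since $P$ is projective, $H_s(P) = 0$ for $s \geqslant 1$, so for every $s \geqslant 1$ this sequence collapses to an isomorphism
\begin{equation*}
H_s(\tau_n P) \cong H_{s+1}(P / \tau_n P).
\end{equation*}

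Finally, passing to torsion degrees and invoking Theorem \ref{hd of torsion modules} applied to the finitely generated torsion module $P / \tau_n P$, I obtain
\begin{equation*}
\hd_s(\tau_n P) = \td(H_s(\tau_n P)) = \td(H_{s+1}(P / \tau_n P)) = \hd_{s+1}(P / \tau_n P) \leqslant \td(P / \tau_n P) + s + 1 \leqslant n + s
\end{equation*}
for every $s \geqslant 1$, which is precisely the asserted inequality.

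There is no serious obstacle here: the only subtlety is bookkeeping the index shift (the $s$-th homology of $\tau_n P$ becomes the $(s+1)$-st homology of the torsion quotient, and $\td(P / \tau_n P) \leqslant n-1$ combines with this extra $+1$ to give the bound $n + s$), together with the minor edge case $\tau_n P = P$, which is handled separately. Everything else is a formal consequence of projectivity of $P$ and the previously established torsion bound.
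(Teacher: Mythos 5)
Your argument is correct and takes essentially the same route as the paper: both consider the short exact sequence $0 \to \tau_n P \to P \to P/\tau_n P \to 0$, observe that the quotient is a finitely generated torsion module of torsion degree at most $n-1$, use projectivity of $P$ to get $H_s(\tau_n P) \cong H_{s+1}(P/\tau_n P)$ for $s \geqslant 1$, and then invoke the torsion bound from Theorem \ref{hd of torsion modules}. The only difference is that the paper first reduces to the case $n \geqslant \gd(P)$ by splitting off the surviving projective summands of $\tau_n P$; as your proof shows, that reduction is not needed, since the long-exact-sequence argument already works for arbitrary $n$.
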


\begin{proof}
Note that if $n < \gd(P)$, then $\tau_n P \cong Q \oplus V$ where $Q$ is a nonzero projective $\tC$-module and $V$ is either 0 or a $\tC$-module with $\gd(V) = n$. In this case, for $s \geqslant 1$, we have $\hd_s(\tau_n P) = \hd_s(V)$. Therefore, without loss of generality one can assume that $n \geqslant \gd(P)$. Consequently, $\gd (\tau_n P) = n$.

Consider the short exact sequence
\begin{equation*}
0 \to \tau_n P \to P \to \overline{P} \to 0,
\end{equation*}
where $\overline{P}$ is a finitely generated torsion module with $\td(\overline{P}) < n$. Therefore,
\begin{equation*}
\hd_s (\tau_n P) = \hd_{s+1} (\overline{P}) \leqslant \td (\overline{P}) + s + 1 < n + s + 1
\end{equation*}
as claimed.
\end{proof}

\section{Applications in representation stability theory}

In this section we apply general results obtained in previous sections to combinatorial categories appearing in representation stability theory.

\subsection{Categories with genetic functors}

In \cite{GL2} Gan and the author showed that the $\mk$-linearizations of the following combinatorial categories all have genetic functors:
\begin{equation} \label{list}
\FI_G, \quad \OI_G, \quad \VI, \quad \FI_d, \quad \OI_d, \quad \FS_G^{\mathrm{op}}, \quad \OS_G^{\mathrm{op}}.
\end{equation}
Therefore, applying Theorem \ref{hd of torsion modules}, one immediately gets

\begin{corollary} \label{hd of torsion modules of combinatorial categories}
Let $\tC$ be the $\mk$-linearization of one of the above combinatorial categories, and let $V$ be a finitely generated torsion $\tC$-module. Then
\begin{equation*}
\hd_s (V) \leqslant \td(V) + s
\end{equation*}
for $s \in \Z$.
\end{corollary}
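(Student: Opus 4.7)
The plan is to reduce this corollary directly to Theorem \ref{hd of torsion modules}, whose hypothesis requires that $\tC$ be a locally finite $\mk$-linear category of type $\Ai$ equipped with a genetic functor. Each of the seven categories listed in (\ref{list}) is manifestly of type $\Ai$ (objects are indexed by $\Z$ and there are no nonzero morphisms from larger objects to smaller ones), and local finiteness is easy to verify in each case from the explicit combinatorial description of morphism sets (finite sets of injections, linear injections, surjections, or pairs of such with $G$-labellings/colourings). So the only substantive input is the existence of a genetic functor on each of them.

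First I would invoke \cite{GL2}, where Gan and the author construct, for each of the categories $\FI_G$, $\OI_G$, $\VI$, $\FI_d$, $\OI_d$, $\FS_G^{\mathrm{op}}$, $\OS_G^{\mathrm{op}}$, a self-embedding functor $\iota$ of degree 1 whose induced shift functor $\So$ satisfies the two axioms in Definition \ref{genetic functors}: namely, $\So(\tC 1_s)$ is a finitely generated projective $\tC$-module for each $s$ (the FGP condition), and moreover $\gd(\So(\tC 1_s)) \leqslant s$. In each of the listed cases the natural shift operation $[i] \mapsto [i+1]$ (appropriately decorated by $G$, by a vector space summand, or by the extra data $\delta$) provides this genetic functor, and the decomposition $\So(\tC 1_s) = P_{s-1} \oplus P_s$ with $P_{s-1}$ and $P_s$ finitely generated projective in degrees $s-1$ and $s$ follows from the combinatorial branching rules spelled out in \cite{GL2}.

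Once genetic functors are in place, Theorem \ref{hd of torsion modules} (which is proved in Section 4 under exactly this hypothesis) immediately yields $\hd_s(V) \leqslant \td(V) + s$ for every finitely generated torsion $\tC$-module $V$ and every $s \in \Z$. Note that finite generation of $V$ together with the torsion hypothesis ensures via Corollary \ref{torsion modules are sfp} that $V$ is $\FPi$, so that all the homological degrees appearing in the statement are indeed finite; this is exactly the scenario in which Proposition \ref{hd under shift} and Proposition \ref{crucial proposition} can be applied.

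There is essentially no obstacle to overcome here: the corollary is a packaging statement, and the only ``work'' is pointing at the existing verification in \cite{GL2} that each member of the list admits a genetic functor. If any step deserves a second look, it is the routine check that each of these categories really is locally finite and of type $\Ai$ as a $\mk$-linear category in the sense of Section 2, but this is immediate from the fact that each hom-set is a finite set (or a finite union of finite sets indexed by $G$, $[d]$, or $\Fq$-linear data) and hence its $\mk$-linearization is a finitely generated free $\mk$-module.
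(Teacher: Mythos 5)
Your proposal is correct and matches the paper's proof: the corollary is indeed obtained by citing \cite{GL2} for the existence of genetic functors on each of the listed categories and then applying Theorem \ref{hd of torsion modules} directly. The extra remarks about local finiteness, the $\Ai$ structure, and the role of Corollary \ref{torsion modules are sfp} are all consistent with what the paper takes for granted.
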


\begin{remark} \normalfont
These categories have been shown to be locally Noetherian; see \cite{SS2}. Therefore, every finitely generated representation is $\FPi$. However, except for the category $\FI$ (see \cite[Theorem A]{CE}), no explicit upper bounds for homological degrees of finitely generated representations was described before, even for finitely generated torsion modules.
\end{remark}

\subsection{Koszul modules}

In the rest of this paper we let $\mk$ be a field of characteristic 0. For this subsection let $\tC$ be the $\mk$-linearization of one of the categories in the List (\ref{list}).
The following result was established in \cite[Proposition 2.10]{GL2}.

\begin{proposition}
For every $V \in \tC \fgmod$, projective cover of $V$ exists and is unique up to isomorphism.
\end{proposition}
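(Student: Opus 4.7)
The plan is to adapt the classical projective-cover construction for finite-dimensional algebras to the present graded setting. The key input is that for every category $\C$ in List (\ref{list}) and every $i \in \Z$, the endomorphism algebra $\tC(i,i)$ is a finite-dimensional group algebra (of $G \wr \Sigma_i$, $\mathrm{GL}_i(\Fq)$, $\Sigma_i$, $G^i$, etc., according to the category), and hence is semisimple by Maschke's theorem since $\mathrm{char}\,\mk = 0$. This reduces the construction to a degree-by-degree problem that is essentially trivial inside each (semisimple) endomorphism algebra.

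For existence, first decompose $V/JV = H_0(V)$. Since $V \in \tC\fgmod$, each $(V/JV)_i$ is a finitely generated $\tC(i,i)$-module, vanishing for $i > \gd(V)$, so by semisimplicity one may write $(V/JV)_i \cong \bigoplus_S S^{\oplus n_{i,S}}$, with $S$ ranging over isomorphism classes of simple $\tC(i,i)$-modules. For each such $S$, viewing $\tC 1_i$ as a right $\tC(i,i)$-module, the induced representation $\tC 1_i \otimes_{\tC(i,i)} S$ is a direct summand of $\tC 1_i$ because $S$ is a summand of the left regular module $\tC(i,i)$; in particular, it is a finitely generated projective $\tC$-module whose top is $S$, concentrated in degree $i$. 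Setting
\begin{equation*}
P = \bigoplus_{0 \leqslant i \leqslant \gd(V)} \bigoplus_S \bigl(\tC 1_i \otimes_{\tC(i,i)} S\bigr)^{\oplus n_{i,S}}
\end{equation*}
yields a finitely generated projective $\tC$-module with $P/JP \cong V/JV$.

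Next I would lift the quotient $V \twoheadrightarrow V/JV$ to a $\tC$-linear map $\varphi : P \to V$ inducing the chosen isomorphism on tops. At each degree $i$ the surjection $V_i \twoheadrightarrow (V/JV)_i$ admits a $\tC(i,i)$-linear section by semisimplicity, and these sections glue into a $\tC$-module map using the universal property of the induced summands. Both the surjectivity of $\varphi$ and the superfluity of $K := \ker\varphi$ in $P$ reduce to a bounded-below Nakayama argument: if $N \subseteq P$ satisfies $N + K = P$, then $N$ surjects onto $P/JP$, and since $P$ lives in degrees $\geqslant 0$ while $J$ strictly raises degree, induction on the smallest degree in which $P/N$ can be nonzero forces $N = P$. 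I expect this Nakayama step to be the main subtlety, since $\tC$ is not assumed to be locally Noetherian; the point is that only the finite generation of $P$, together with the bounded-below grading of $\tC$, is needed for the induction to terminate.

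Uniqueness then follows by the standard argument: given two projective covers $\varphi_j : P_j \to V$ for $j = 1, 2$, projectivity of $P_1$ produces $\alpha : P_1 \to P_2$ with $\varphi_2 \alpha = \varphi_1$; then $\mathrm{im}(\alpha) + \ker\varphi_2 = P_2$, so superfluity of $\ker\varphi_2$ forces $\alpha$ to be surjective. Projectivity of $P_2$ splits $\alpha$, yielding $P_1 \cong P_2 \oplus \ker\alpha$, and superfluity of $\ker\varphi_1$ in $P_1$ rules out a nontrivial $\ker\alpha$, so $P_1 \cong P_2$.
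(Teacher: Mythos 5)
Your construction matches the paper's own explicit description $P_V = \tC \otimes_{\tC_0} H_0(V)$ of the projective cover: decomposing $H_0(V)$ degreewise over the semisimple $\tC_0$ gives exactly your $P = \bigoplus_i \bigoplus_S \bigl(\tC 1_i \otimes_{\tC(i,i)} S\bigr)^{\oplus n_{i,S}}$, and the bounded-below graded Nakayama verification (via $\ker\varphi \subseteq JP$ and the fact that $J$ strictly raises degree) is the standard check the paper leaves to the reader, citing \cite[Proposition 2.10]{GL2} for the proof. The proposal is correct and takes essentially the same route.
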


The projective cover $P_V$ of $V$ has the following explicit description:
\begin{equation*}
P_V = \tC \otimes _{\tC_0} H_0 (V),
\end{equation*}
where we regard $\tC_0$ as a subcategory of $\tC$. Since $H_0(V) \cong V/JV$ is finitely generated, and each $\tC(i, i)$ is a finite dimensional semisimple algebra for $i \in \Z$, the reader easily sees that $P_V$ is indeed a finitely generated projective $\tC$-module. Moreover, one has $H_0 (P) \cong H_0(V)$.

Since projective cover of $V$ exists and is unique up to isomorphism, \emph{syzygies} are well defined. Explicitly, given a finitely generated $\tC$-module $V$, there exists a surjection
\begin{equation*}
\tC \otimes _{\tC_0} H_0(V) \to V.
\end{equation*}
The first syzygy $\Omega V$ is defined to be the kernel of this map, which is unique up to isomorphism. Recursively, one can define $\Omega ^i V$, the $i$-th syzygy of $V$ for $i \in \Z$.

For $V \in \tC \fgmod$, its \emph{support}, denoted by $\supp(V)$, is set to be
\begin{equation*}
\{ i \in \Z \mid V_i \neq 0 \}.
\end{equation*}
Its \emph{initial degree}, denoted by $\ini(V)$, is the minimal object in $\supp(V)$. If $V = 0$, we let $\ini(V) = - \infty$.

\begin{lemma} \label{ini}
Let $V$ be a finitely generated $\tC$-module. One has $H_1(V) \cong H_0 (\Omega V)$. In particular, $\hd_1(V) = \gd(\Omega V)$, and hence $\ini (\Omega V) > \ini(V)$ if $\Omega V \neq 0$.
\end{lemma}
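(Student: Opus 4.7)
The plan is to exploit the short exact sequence
\begin{equation*}
0 \to \Omega V \to P_V \to V \to 0
\end{equation*}
that defines the first syzygy, and then apply the derived functors $\Tor_\bullet^{\tC}(\tC_0, -)$ to obtain the associated long exact sequence
\begin{equation*}
\ldots \to H_1(P_V) \to H_1(V) \to H_0(\Omega V) \to H_0(P_V) \to H_0(V) \to 0.
\end{equation*}
Since $P_V$ is projective we have $H_1(P_V) = 0$, so the first assertion $H_1(V) \cong H_0(\Omega V)$ reduces to showing that the map $H_0(P_V) \to H_0(V)$ induced by the projective cover is an isomorphism.

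For this, I would unwind the explicit formula $P_V = \tC \otimes_{\tC_0} H_0(V)$ and compute
\begin{equation*}
H_0(P_V) = \tC_0 \otimes_{\tC} (\tC \otimes_{\tC_0} H_0(V)) \cong \tC_0 \otimes_{\tC_0} H_0(V) \cong H_0(V),
\end{equation*}
observing that the composite map to $H_0(V) = V/JV$ identifies with the identity since the projective cover map $P_V \to V$ sends the copy of $H_0(V)$ sitting in degree zero of the generating part to the class of $V$ modulo $JV$. Once $H_1(V) \cong H_0(\Omega V)$ is established, the equality $\hd_1(V) = \gd(\Omega V)$ is immediate: by definition $\hd_1(V) = \td(H_1(V)) = \td(H_0(\Omega V))$, and the generating degree $\gd(\Omega V)$ is precisely $\td(H_0(\Omega V))$ as recorded in the discussion after Lemma \ref{compare gd}.

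It remains to verify the inequality $\ini(\Omega V) > \ini(V)$ when $\Omega V \neq 0$. Let $n = \ini(V)$. Because $V_i = 0$ for $i < n$, we also have $(JV)_n = 0$ by the type $\Ai$ property, so $H_0(V)_n = V_n$ and $H_0(V)_i = 0$ for $i < n$. Evaluating $P_V = \tC \otimes_{\tC_0} H_0(V)$ on $i \leqslant n$ shows that $(P_V)_i = 0$ for $i < n$ and $(P_V)_n = H_0(V)_n = V_n$, with the map $(P_V)_n \to V_n$ being the identity. Consequently $(\Omega V)_i = 0$ for all $i \leqslant n$, which gives $\ini(\Omega V) \geqslant n + 1 > \ini(V)$.

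I do not anticipate a serious obstacle here; the only point that requires a little care is ensuring that the isomorphism $H_0(P_V) \cong H_0(V)$ really is induced by the projective cover map rather than just being an abstract isomorphism. This is a matter of tracing through the canonical identifications coming from the definition $P_V = \tC \otimes_{\tC_0} H_0(V)$, and once handled the remainder of the lemma falls out of the long exact sequence and the explicit degree computation above.
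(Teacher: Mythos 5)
Your proof is correct and follows essentially the same route as the paper's: apply $\tC_0 \otimes_{\tC}-$ to the syzygy sequence $0 \to \Omega V \to P_V \to V \to 0$, use $H_1(P_V)=0$ and the minimality of the projective cover to identify $H_0(P_V)\cong H_0(V)$ (hence $H_1(V)\cong H_0(\Omega V)$), and then check degreewise that $(P_V)_{\ini(V)}\to V_{\ini(V)}$ is an isomorphism so that $\Omega V$ vanishes through degree $\ini(V)$. You simply spell out the identifications $H_0(P_V)=\tC_0\otimes_{\tC}\tC\otimes_{\tC_0}H_0(V)\cong H_0(V)$ and $(P_V)_n\cong H_0(V)_n=V_n$ which the paper leaves implicit.
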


\begin{proof}
Let $P$ be a projective cover of $V$. From the short exact sequence
\begin{equation*}
0 \to \Omega V \to P \to V \to 0
\end{equation*}
one gets
\begin{equation*}
0 \to H_1(V) \to H_0 (\Omega V) \to H_0(P) \to H_0(V) \to 0.
\end{equation*}
However, since $P$ is a projective cover of $V$, $H_0(P) \cong H_0(V)$. Consequently, $H_1(V) \cong H_0 (\Omega V)$ as claimed. It immediately follows that $\hd_1(V) = \gd (\Omega V)$. Moreover, since the values of $P$ and $V$ on the object $\ini(V)$ must be isomorphic, this forces the value of $\Omega V$ on $\ini(V)$ to be 0, so $\ini (\Omega V) > \ini(V)$ when $\Omega V \neq 0$.
\end{proof}

In \cite{GL2} we proved that $\tC$ is a Koszul category. That is, $\tC (i,i)$ has a linear projective resolution for every $i \in \Z$. We give an equivalent definition here; see \cite[Lemma 4.2]{GL2}. For a general introduction of Koszul theory (including several generalized versions), one may refer to \cite{BGS, Li, MOS}.

\begin{definition}
A finitely generated $\tC$-module generated in degree $d$ is said to be Koszul if for $s \geqslant 0$, one has $\supp(H_s (V)) \subseteq \{ s+d \}$. In other words, $\Omega^s V$ is either 0 or is generated in degree $s+d$.
\end{definition}

Using this equivalent definition, Corollary \ref{hd of torsion modules of combinatorial categories} actually gives another proof for the Koszulity of $\tC$.

\begin{corollary} \cite[Theorem 4.15]{GL2}
The category $\tC$ is a Koszul category.
\end{corollary}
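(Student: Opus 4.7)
The plan is to apply the equivalent definition of Koszulity just introduced and show that for each $i \in \Z$, the $\tC$-module $\tC(i,i)$ (concentrated in degree $i$) is Koszul, i.e.\ $\supp(H_s(\tC(i,i))) \subseteq \{i+s\}$ for all $s \geqslant 0$. Since projective covers exist in the present setting, syzygies are well defined, and by iterating Lemma~\ref{ini} one obtains $H_s(V) \cong H_0(\Omega^s V)$ for every $s \geqslant 0$. It therefore suffices to prove that $\Omega^s \tC(i,i)$ is either zero or generated in the single degree $i+s$.

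The upper bound is immediate: $\tC(i,i)$ is a finitely generated torsion module with $\td(\tC(i,i)) = i$, so Corollary~\ref{hd of torsion modules of combinatorial categories} yields $\hd_s(\tC(i,i)) \leqslant i+s$, which translates into $\gd(\Omega^s \tC(i,i)) \leqslant i+s$ via the identification above. For the matching lower bound I would induct on $s$: starting from $\ini(\tC(i,i)) = i$ and invoking the second assertion of Lemma~\ref{ini} (each nonzero syzygy strictly increases the initial degree), one gets $\ini(\Omega^s \tC(i,i)) \geqslant i+s$ whenever this syzygy is nonzero. Combined with the obvious inequality $\ini(W) \leqslant \gd(W)$ for any nonzero locally finite module $W$, these two estimates sandwich $\gd(\Omega^s \tC(i,i)) = \ini(\Omega^s \tC(i,i)) = i+s$, so $\Omega^s \tC(i,i)$ is supported in degrees $\geqslant i+s$ and generated in degree $i+s$, forcing $H_s(\tC(i,i)) = H_0(\Omega^s \tC(i,i))$ to be supported only at $i+s$.

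I do not anticipate a substantial obstacle here: the proof is essentially a clean marriage of the upper bound from Corollary~\ref{hd of torsion modules of combinatorial categories}, which is the main new input of this paper, with the elementary lower bound on initial degrees of syzygies recorded in Lemma~\ref{ini}. The only technical prerequisite is the existence of projective covers for finitely generated $\tC$-modules over a field of characteristic $0$, which is precisely the proposition cited just before Lemma~\ref{ini}; this is what legitimizes iterating the syzygy construction and passing from $\hd_s(V)$ to $\gd(\Omega^s V)$.
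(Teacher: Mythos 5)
Your proposal is correct and follows essentially the same route as the paper: the upper bound $\hd_s(\tC(i,i)) \leqslant i+s$ comes from the torsion-module bound, while the matching lower bound $\ini(\Omega^s \tC(i,i)) \geqslant i+s$ comes from iterating Lemma~\ref{ini}, and the two together pin $\supp(H_s(\tC(i,i)))$ to the single degree $i+s$. The only cosmetic difference is that you phrase the sandwich via $\gd$ and $\ini$ of the syzygy, whereas the paper phrases it directly in terms of $\supp(H_s)$ and $\ini(H_s)$; the content is identical.
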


\begin{proof}
For an arbitrary $i \in \Z$, let $V = \tC(i, i)$, regarded as a $\tC$-module concentrated on object $i$. By Theorem \ref{hd of torsion modules}, one has $\hd_s(V) \leqslant i + s$ for $s \in \Z$. Therefore,
\begin{equation*}
\supp(H_s (V)) \subseteq \{i, \, i+1, \, \ldots, \, i + s \}.
\end{equation*}
On the other hand, by Lemma \ref{ini}, if $H_s(V) \neq 0$, then
\begin{equation*}
H_s (V) = H_{s-1} (\Omega V) = \ldots = H_0 (\Omega^s V)
\end{equation*}
and
\begin{equation*}
\ini (\Omega^s V) > \ini (\Omega^{s-1} V) > \ldots > \ini( \Omega(V)) > \ini (V).
\end{equation*}
This forces
\begin{equation*}
\ini (H_s(V)) = \ini (H_0 (\Omega^s(V))) = \ini(\Omega^s V) \geqslant \ini(V) + s = i + s.
\end{equation*}
Consequently, $\supp (H_s(V) = \{ i+s \}$. By the above definition, $V = \tC(i, i)$ is a Koszul module. Since $i$ is arbitrary, $\tC$ is a Koszul category.
\end{proof}

The following proposition tells us that functors $\tau_n$ preserve Koszul modules for $n \in \Z$.

\begin{proposition} \label{Koszul modules}
Let $V$ be a finitely generated $\tC$-module. Then:
\begin{enumerate}
\item If $V$ is generated in degree $d$ with $d \geqslant 1$ and $\So V$ is Koszul, then $V$ is Koszul as well.
\item If $V$ is Koszul, so is $\tau_n V$ for every $n \in \Z$.
\end{enumerate}
\end{proposition}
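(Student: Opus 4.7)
For part (1), I would induct on $s \geqslant 0$ to show $\supp(H_s(V)) \subseteq \{d+s\}$. The base case $s=0$ is immediate since $V$ is generated in degree $d$. For the inductive step, the key observation is that $(\So V)_{d-1} = V_d$ is nonzero while $(\So V)_i = 0$ for $i < d-1$, so $H_0(\So V)_{d-1} = V_d \neq 0$; since $\So V$ is Koszul and therefore generated in a single degree, that degree must equal $d-1$, forcing $\supp(H_s(\So V)) \subseteq \{(d-1)+s\}$ for all $s$. Proposition \ref{hd under shift}(2) combined with the inductive hypothesis then gives
\begin{equation*}
\hd_s(V) \leqslant \max\{\hd_0(V)+1,\,\ldots,\,\hd_{s-1}(V)+1,\,\hd_s(\So V)+1\} \leqslant d+s,
\end{equation*}
while iteration of Lemma \ref{ini} yields $\ini(H_s(V)) \geqslant \ini(V) + s = d+s$. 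Together these pin the support of $H_s(V)$ to the single degree $d+s$.

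For part (2), I would induct on $n$, with the trivial base $n \leqslant d$. The inductive step reduces to the following \emph{key claim}: if $U$ is Koszul generated in degree $d'$, then $\tau_{d'+1}U$ is Koszul generated in degree $d'+1$. Iterating this, and using $\tau_n \tau_{n-1} = \tau_n$, handles all $n > d$. To prove the claim, consider the short exact sequence $0 \to \tau_{d'+1}U \to U \to U_{d'} \to 0$. Here $U_{d'}$, regarded as a $\tC$-module concentrated in degree $d'$, is semisimple over $\tC(d',d')$ since $\mk$ has characteristic $0$, and hence decomposes into simple torsion modules; Theorem \ref{hd of torsion modules} together with Lemma \ref{ini} forces each simple summand to have homology supported in a single degree, so $U_{d'}$ is Koszul in degree $d'$. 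The long exact sequence in homology then collapses: at each object $j$, Koszulity of both $U$ and $U_{d'}$ makes all contributing terms vanish unless $j \in \{d'+s,\, d'+s+1\}$, with the $j = d'+s+1$ piece realized harmlessly as a cokernel. What remains is to show that at $j = d'+s$ the comparison map $H_s(U)_{d'+s} \to H_s(U_{d'})_{d'+s}$ is injective, which then forces $H_s(\tau_{d'+1}U)_{d'+s} = 0$.

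The main obstacle is the injectivity of this comparison map. I would establish it by constructing, inductively in $s$, a compatible inclusion of syzygies $\Omega^s U \hookrightarrow \Omega^s U_{d'}$. For $s=1$ one uses the common projective cover $P = \tC \otimes_{\tC_0} U_{d'}$ of $U$ and $U_{d'}$, so that $\Omega U = \ker(P \to U) \subseteq \ker(P \to U_{d'}) = \Omega U_{d'}$. For the inductive step, Koszulity forces the projective covers of $\Omega^{s-1}U$ and $\Omega^{s-1}U_{d'}$ to be of the form $\tC \otimes_{\tC_0} (\Omega^{s-1}(-))_{d'+s-1}$; the inclusion $(\Omega^{s-1}U)_{d'+s-1} \hookrightarrow (\Omega^{s-1}U_{d'})_{d'+s-1}$ of finite-dimensional semisimple $\tC(d'+s-1, d'+s-1)$-modules splits, so the induced map on projective covers is a split inclusion. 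A snake lemma argument applied to the resulting $3 \times 3$ diagram of syzygy short exact sequences then produces $\Omega^s U \hookrightarrow \Omega^s U_{d'}$. Evaluating at $d'+s$, where both syzygies are concentrated and $H_s(-)_{d'+s}$ coincides with the value of $\Omega^s(-)$ at $d'+s$, identifies the comparison map with this injection and completes the argument.
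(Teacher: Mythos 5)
Your argument for part~(1) is correct and self\nobreakdash-contained: you combine Proposition~\ref{hd under shift}(2) (with the inductive hypothesis to bound $\hd_i(V)+1$ for $i<s$, and Koszulity of $\So V$ generated in degree $d-1$ to bound $\hd_s(\So V)+1$) with iteration of Lemma~\ref{ini} to get $\ini(H_s(V))\geqslant d+s$, pinning $\supp(H_s(V))$ to $\{d+s\}$. The paper itself does not prove part~(1); it simply cites \cite[Proposition~4.13]{GL2}, so there is nothing internal to compare against.

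For part~(2) you begin exactly as the paper does: with $U=\tau_{n-1}V$ and $d'=n-1$, your sequence $0\to\tau_{d'+1}U\to U\to U_{d'}\to 0$ is the paper's $0\to\tau_n V\to\tau_{n-1}V\to V_{n-1}\to 0$, and both proofs read off from the long exact sequence (using Koszulity of $U$ by induction and of the torsion module $U_{d'}$) that $\supp(H_s(\tau_n V))\subseteq\{n+s-1,\,n+s\}$. Where the two proofs diverge is in eliminating the object $n+s-1$. The paper applies Lemma~\ref{ini} to $\tau_n V$ \emph{itself}: since $\ini(\tau_n V)\geqslant n$, iterating $\ini(\Omega W)>\ini(W)$ yields $\ini(H_s(\tau_n V))=\ini(\Omega^s(\tau_n V))\geqslant n+s$, which disposes of $n+s-1$ in one line. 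You instead build, inductively on $s$, a compatible chain of split inclusions $\Omega^s U\hookrightarrow\Omega^s U_{d'}$ via common minimal projective covers and semisimplicity of the $\tC(i,i)$, and then argue that the comparison map $H_s(U)_{d'+s}\to H_s(U_{d'})_{d'+s}$ is injective. This route does work, but it is far heavier machinery than the problem requires, and the last step --- identifying the natural map on $\Tor$ at object $d'+s$ with the restriction of your syzygy inclusion --- is asserted rather than verified; one has to chase the dimension\nobreakdash-shifting isomorphisms through the whole commuting ladder of short exact sequences to justify it. The striking point is that you already use the Lemma\nobreakdash-\ref{ini} iteration trick in part~(1); applying it to $\tau_n V$ rather than to $U$ and $U_{d'}$ would have replaced your entire syzygy\nobreakdash-inclusion construction in part~(2) and reproduced the paper's proof.
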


\begin{proof}
The first part is precisely \cite[Proposition 4.13]{GL2}, although a different notation is used there.

Now we prove (2). Let $V$ be a nonzero Koszul module generated in degree $d$ for a certain $d \in \Z$. For $n < d$, the conclusion holds trivially since $\tau_n V \cong V$. So we assume $n \geqslant d$ and carry out induction on the difference $n - d$. The conclusion holds for $n - d = 0$. Suppose that it is true for $n - d = r$, and let us consider $n - d = r+1$.

Applying $\tC_0 \otimes _{\tC} -$ to the exact sequence
\begin{equation*}
0 \to \tau_n V \to \tau_{n-1} V \to V_{n-1} \to 0
\end{equation*}
of $\tC$-modules one gets
\begin{equation*}
\ldots \to H_{s+1} (V_{n-1}) \to H_s(\tau_n V) \to H_s (\tau_{n-1} V) \to H_s (V_{n-1}) \to \ldots.
\end{equation*}
Since $\tC$ is a Koszul category, $V_n$ is a Koszul module. By induction, $\tau_{n-1} V$ is also a Koszul module. Therefore,
\begin{equation*}
\supp (H_{s+1} (V_{n-1})) \subseteq \{ n - 1 + s + 1\} = \{n+s \}
\end{equation*}
and
\begin{equation*}
\supp (H_s (\tau_{n-1} V)) \subseteq \{ n - 1 + s \}.
\end{equation*}
Therefore,
\begin{equation*}
\supp (H_s (\tau_n V)) \subseteq \{ n - 1 + s, \, n + s \}.
\end{equation*}
However, by Lemma \ref{ini}, if $\Omega^s (\tau_n V) \neq 0$, one has
\begin{equation*}
\ini (\Omega^s (\tau_n V)) > \ini (\Omega ^{s-1} (\tau_n V)) > \ldots > \ini (\tau_n V) = n,
\end{equation*}
so $\ini (\Omega^s (\tau_n V)) \geqslant n+s$. However, since
\begin{equation*}
H_s (\tau_n V) \cong \tC_0 \otimes _{\tC} \Omega^s (\tau_n V),
\end{equation*}
one also has $\ini (H_s (\tau_n V)) \geqslant s+n$ if $\Omega^s V (\tau_n V)$ is nonzero. Thus $\supp (H_s (\tau_n V)) = \{ n + s \}$ for $s \in \Z$, and hence $\tau_n V$ is Koszul as well. The conclusion then follows from induction.
\end{proof}

\begin{remark} \normalfont
In the first statement of this proposition we require $d \geqslant 1$ to avoid the following case. Let $P = \tC(0,-)$ and $V = P / J^2 P$. Then $\So V$ is a Koszul module, but $V$ is not Koszul.
\end{remark}

\begin{remark} \normalfont
The reader can see that proofs in this subsection do not rely on any specific property of these combinatorial categories in List (\ref{list}). Therefore, all results described in this subsection hold for general $k$-linear, locally finite categories of type $\Ai$, provided that they are equipped with genetic functors, and the endomorphism algebra of each object is a finite dimensional semisimple algebra.
\end{remark}

\subsection{Homological degrees of $\FI$-modules}

In the rest of this paper let $\mk$ be a field of characteristic 0, and fix $\tC$ to be the $\mk$-linearization of $\FI$. We list some results which will be used later.

\begin{theorem}
Let $\tC$ be the $\mk$-linearization of $\FI$. Then:
\begin{enumerate}
\item The category $\tC \fgmod$ is abelian.
\item Every finitely generated projective $\tC$-module is also injective. Moreover, a finitely generated injective $\tC$-module is a direct sum of a finitely generated projective module and a finite dimensional injective module.
\item Every finitely generated $\tC$-module $V$ has a finite injective resolution. In particular, for a sufficiently large $a$, $\Sa V$ is a finitely generated projective module.
\end{enumerate}
\end{theorem}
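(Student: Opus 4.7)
The plan is to recognize that parts (1)--(3) are largely drawn from the existing $\FI$-literature, so my ``proof'' would primarily consist of citations, with genuine work required only in the last clause of (3).

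For (1), I would cite the Noetherianity theorem of Church--Ellenberg--Farb--Nagpal \cite{CEFN}: over a Noetherian ring $\mk$, every submodule of a finitely generated $\FI$-module is finitely generated. Since kernels, images, and cokernels of $\tC$-module maps are computed objectwise, and subfunctors and quotient functors of finitely generated modules remain finitely generated, $\tC \fgmod$ is closed under kernels and cokernels, hence abelian.

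For (2), I would invoke the self-injectivity theorem of Sam--Snowden \cite{SS1}, with a shorter proof by Gan--Li \cite{GL3} via the coinduction functor right adjoint to $\So$: in characteristic $0$ every principal projective $\tC 1_i$ is also injective because it arises as the coinduction of an injective (equivalently projective, by semisimplicity of $\mk S_i$) $\tC(i,i)$-module, and a finitely generated injective then decomposes as a finitely generated projective summand plus its torsion submodule, which is necessarily finite dimensional.

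For the ``in particular'' clause of (3), I would start with a finite injective resolution
\begin{equation*}
0 \to V \to I^0 \to I^1 \to \cdots \to I^n \to 0
\end{equation*}
with each $I^j$ finitely generated (existence is part of the Sam--Snowden theorem). Decompose $I^j = P^j \oplus F^j$ via (2), with $P^j$ finitely generated projective and $F^j$ finite dimensional. Set $N = \max_j \td(F^j)$ and pick any $a > N$; then $\Sa F^j = 0$ for every $j$, so the exactness of $\Sa$ yields
\begin{equation*}
0 \to \Sa V \to \Sa P^0 \to \Sa P^1 \to \cdots \to \Sa P^n \to 0.
\end{equation*}
Each $\Sa P^j$ is finitely generated projective by the FGP condition (which holds for $\FI$ since it carries a genetic functor by \cite{GL2}), and is therefore also injective by (2). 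I would then conclude that $\Sa V$ is projective by peeling off the resolution from the right end: the surjection $\Sa P^{n-1} \twoheadrightarrow \Sa P^n$ splits, because its target is projective, so its kernel is a direct summand of $\Sa P^{n-1}$, hence finitely generated projective and injective; replacing $\Sa P^{n-1}$ by this kernel and discarding $\Sa P^n$ gives a resolution of the same form but of length $n-1$, and iterating reduces to the base case $\Sa V \cong \Sa P^0$. The main obstacle in the theorem is genuinely (2), which I am merely quoting; given it, the remaining statements are bookkeeping, and the only argument requiring care is that the peeling induction must be run from the right end of the coresolution, where the projective-injective coincidence is what licenses each splitting.
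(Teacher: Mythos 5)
Your proposal is correct and matches the paper's handling of this theorem: the paper states it without proof, attributing (1) to Church--Ellenberg--Farb and Nagpal, and (2) together with (3) to Sam--Snowden \cite{SS1} and to Gan--Li's coinduction-functor argument \cite{GL3}, exactly the sources you cite. Your peeling argument for the ``in particular'' clause of (3) is a correct fill-in of a step the paper merely defers to \cite{GL3}: once $a$ is large enough to annihilate the finite-dimensional summands $F^j$, the shifted coresolution has every term finitely generated projective-injective, so it splits one term at a time from the right and exhibits $\Sa V$ as a direct summand of $\Sa P^0$.
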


\begin{remark} \normalfont
Statement (1) of the theorem was firstly proved by Church, Ellenberg, and Farb in \cite{CEF} over fields with characteristic 0.  In \cite{CEFN} they and Nagpal showed the same conclusion for arbitrary commutative Noetherian rings by using the shift functor. The result was generalized to many combinatorial categories by Gan and the author in \cite{GL1}, and by Sam and Snowden in \cite{SS2}.

Statement (2) and the first half of Statement (3) were proved by Sam and Snowden in \cite{SS1} in the language of twisted commutative algebras. Using the coinduction functor related to $\So$, Gan and the author gave in \cite{GL3} a proof of Statements (2) and (3) for $\FI_G$. In \cite{GL3} we also observed that (3) implies the representation stability of $\FI_G$-modules.
\end{remark}

\begin{remark} \normalfont
In \cite{GL3} Gan and the author proved that when $\mk$ is a field of characteristic 0, every finitely generated projective representation of $\VI$ is injective as well. However, we do not know whether the second half of Statement (2) and Statement (3) hold for $\VI$.
\end{remark}

\begin{lemma} \label{compare gd to hd}
Let $V$ be a nonzero finitely generated $\FI$-module. Then $\td(V) < \infty$. Moreover, if $V$ has no projective summands, then
\begin{equation*}
\gd(V) < \hd_1(V).
\end{equation*}
\end{lemma}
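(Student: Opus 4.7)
The plan is to handle the two assertions separately.

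For the finiteness of $\td(V)$, I invoke local Noetherianity of $\FI$, which is implicit in Statement (1) above. The torsion subfunctor $T(V) \subseteq V$ — the sum of all submodules supported on finitely many objects — is then a finitely generated submodule of $V$, hence itself a finitely generated torsion $\tC$-module, and therefore supported on finitely many objects. Since any element of $V_i$ annihilated by all morphisms with codomain of degree $> i$ generates a submodule concentrated at $i$, such an element lies in $T(V)$; hence $\td(V) = \td(T(V)) < \infty$.

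For the second claim, I argue by contrapositive: assuming $\hd_1(V) \leqslant \gd(V) =: d$, I will construct a projective summand of $V$. Write $W = H_0(V)_d$, an $S_d$-representation, and decompose the projective cover as $\tilde{P} = P \oplus P'$, where $P = \tC \otimes_{\mk S_d} W$ and $P' = \tC \otimes_{\tC_0} \bigoplus_{i < d} H_0(V)_i$. Because $\mathrm{char}\, \mk = 0$, the group algebra $\mk S_d$ is semisimple, so the projection $V_d \twoheadrightarrow V_d/(JV)_d = W$ admits an $S_d$-equivariant section, lifting $W$ to a sub-$S_d$-module $W' \subseteq V_d$ with $V_d = W' \oplus (JV)_d$. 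I then fix a projective cover map $\pi: \tilde{P} \twoheadrightarrow V$ so that $\pi|_{P_d} = \pi|_W$ is the inclusion $W \xrightarrow{\sim} W' \hookrightarrow V_d$.

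By Lemma \ref{ini}, $\hd_1(V) = \gd(\Omega V)$, so the hypothesis forces $\Omega V$ to be generated in degrees $\leqslant d$. Generators of degree strictly less than $d$ automatically lie in $P'$, because $P_j = 0$ for $j < d$. For a degree-$d$ generator, $\pi_d$ sends $(w, p) \in (\Omega V)_d \subseteq W \oplus P'_d$ to $(w, \pi_d(p)) \in W' \oplus (JV)_d$; vanishing forces $w = 0$, so $(\Omega V)_d \subseteq P'_d$, and altogether $\Omega V \subseteq P'$. This makes $\pi|_P: P \hookrightarrow V$ injective, and using $P \cap P' = 0$ together with $\ker \pi \subseteq P'$ one verifies $V = \pi(P) \oplus \pi(P')$, yielding a projective summand $\pi(P) \cong P$ of $V$ and contradicting the hypothesis. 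The crux — and the step requiring $\mathrm{char}\, \mk = 0$ — is the degree-$d$ computation $(\Omega V)_d \cap W = 0$, which rests entirely on our ability to choose $\pi$ compatibly with the semisimple splitting $V_d = W' \oplus (JV)_d$; in positive characteristic no such splitting need exist, and the conclusion itself can genuinely fail.
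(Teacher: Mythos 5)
Your proof is correct and follows essentially the same route as the paper's. For the first assertion, you fill in what the paper simply cites to \cite{CEF, GL1}: local Noetherianity forces the torsion submodule $T(V)$ to be finitely generated, hence supported in finitely many degrees, and $\td(V) = \td(T(V))$. For the second, the paper decomposes the projective cover as $P = P' \oplus P''$ with $P'$ generated in degrees $< s$ and $P''$ in degree $s = \gd(V)$, observes that $\Omega V \subseteq JP$ forces the image $\overline{\Omega V}$ of $\Omega V$ in $P''$ to vanish in degrees $\leqslant s$, and argues directly: ``no projective summand'' forces $\overline{\Omega V} \neq 0$, whence $\hd_1(V) = \gd(\Omega V) \geqslant \gd(\overline{\Omega V}) > s$. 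You run the identical idea contrapositively: assuming $\gd(\Omega V) \leqslant d$, the key computation $(\Omega V)_d \cap P_d = 0$ (your $P$ is the paper's $P''$) places $\Omega V$ inside $P'$, so $V \cong P \oplus (P'/\Omega V)$ has a projective summand. Where the paper invokes the minimality property $\Omega V \subseteq J\tilde P$ abstractly, you re-derive it by explicitly choosing $\pi$ compatibly with a semisimple splitting $V_d = W' \oplus (JV)_d$; these are the same fact, and both hinge on $\mk S_d$ being semisimple. The only genuine stylistic difference is direct versus contrapositive phrasing, which here amounts to the same inequality.
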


\begin{proof}
Since $\FI$ is locally Noetherian over a filed of characteristic 0, $V$ is a Noetherian module. This implies the first statement; see \cite[Definition 3.3.2, I. Injectivity]{CEF} or \cite[Proposition 5.1]{GL1}.

Since projective cover of $V$ exists, one may consider the exact sequence
\begin{equation*}
0 \to \Omega V \to P \to V \to 0.
\end{equation*}
Note that $\hd_1 (V) = \gd (\Omega V)$. Also note that $\gd(V) = \gd(P)$.

Let $s = \gd(V)$. The above short exact sequence gives rise to a commutative diagram of exact sequences
\begin{equation*}
\xymatrix{
0 \ar[r] & \Omega V \cap P' \ar[r] \ar[d] & P' \ar[r] \ar[d] & V' \ar[r] \ar[d] & 0\\
0  \ar[r] & \Omega V \ar[r] \ar[d] & P \ar[r] \ar[d] & V \ar[r] \ar[d] & 0\\
0 \ar[r] & \overline{\Omega V} \ar[r] & P'' \ar[r] & V'' \ar[r] & 0
}
\end{equation*}
where $P'$ is generated in degrees $<s$ and $P''$ is generated in degree $s$.

Since $\gd(V) = s$, $P''$ cannot be 0. Since $V$ has no projective summands, $\overline{ \Omega V}$ cannot be 0. Moreover, one has $P''_s \cong V''_s$ since $P''$ is also a projective cover of $V''$. Consequently,
\begin{equation*}
\hd_1 (V) = \gd (\Omega V) \geqslant \gd (\overline {\Omega V}) > \gd(P'') = s = \gd(V)
\end{equation*}
as claimed.
\end{proof}

Recall that every finitely generated $\FI$-module has a finite injective resolution. We briefly mention the construction; for details, see \cite[Section 7]{GL3}.

Let $V$ be a nonzero finitely generated $\FI$-module. Again, without loss of generality we can assume that $V$ has no projective summands since projective $\FI$-modules are also injective. The module $V$ gives rise to a short exact sequence
\begin{equation*}
0 \to V_T \to V \to V_F \to 0
\end{equation*}
such that $V_T$ is a finite dimensional torsion module (might be 0) and $V_F$ is a finitely generated torsionless module (might be 0). Now since $V_T$ is finite dimensional, one can get an injection $V_T \to I^0$ where $I^0$ is the injective hull of $V_T$. For $V_F$, by \cite[Proposition 7.5]{GL3}, there is also an injection $V_F \to P^0$ with $\gd (P^0) < \gd(V_F)$. Of course, we can make $P^0$ minimal by removing all projective summands from the cokernel of this map. Putting these two injections together, one has
\begin{equation} \label{construct injective resolution}
\xymatrix{
0 \ar[r] & V_T \ar[r] \ar[d] & V \ar[r] \ar[d] & V_F \ar[r] \ar[d] & 0\\
0 \ar[r] & I^0 \ar[r] \ar[d] & I^0 \oplus P^0 \ar[r] \ar[d] & P^0 \ar[r] \ar[d] & 0\\
0 \ar[r] & C' \ar[r] & \ar[r] V^{-1} \ar[r] & C'' \ar[r] & 0,
}
\end{equation}
where $C'$ and $C''$ are the cokernels. Repeating the above procedure for $V^{-1}$, after finitely many steps, one can reach $V^{-i} = 0$ at a certain step; see \cite[Theorem 1.7]{GL3}.

\begin{remark} \normalfont
Note that one cannot expect $C'$ to be precisely the torsion part of $V^{-1}$ because $C''$ might not be torsionless. Therefore, one has to construct the torsion-torsionless exact sequence again for $V^{-1}$, which in general is different from the bottom sequence in the above diagram.
\end{remark}

\begin{remark} \normalfont
In \cite{N} Nagpal showed that for an arbitrary commutative Noetherian ring $\mk$ and a finitely generated $\FI$-module $V$, it gives rise to a complex of finite length each term of which is a certain special module coinciding with a projective $\tC$-module when $\mk$ is a field of characteristic 0; see \cite[Theorem A]{N}. His result generalized our construction.
\end{remark}

The following observation is crucial for us to obtain an upper bound for homological degrees of $\tC$-modules.

\begin{lemma} \label{hd of cokernel}
Let $V$ and $V^{-1}$ be as above. If $V$ is torsionless, then for $s \in \Z$,
\begin{equation*}
H_s(V) \cong H_{s+1} (V^{-1}) \quad \text{and } \hd_s (V) = \hd_{s+1} (V^{-1}).
\end{equation*}
\end{lemma}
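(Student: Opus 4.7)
Since $V$ is torsionless, the torsion part $V_T$ in diagram (\ref{construct injective resolution}) is zero and we may take $I^0 = 0$; what remains of the middle column is a short exact sequence
\[
0 \to V \to P^0 \to V^{-1} \to 0,
\]
in which $P^0$ is a finitely generated projective (equivalently, since $\mathrm{char}\,\mk = 0$, injective) $\FI$-module. The plan is to apply $\tC_0 \otimes_{\tC} -$ to this sequence and read off the long $\Tor$ exact sequence. Projectivity of $P^0$ gives $H_s(P^0) = 0$ for all $s \geq 1$, so the connecting homomorphism immediately furnishes the isomorphism $H_s(V) \cong H_{s+1}(V^{-1})$ whenever $s \geq 1$.

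The substantive case is $s = 0$, where the tail of the $\Tor$ sequence reads
\[
0 \to H_1(V^{-1}) \to H_0(V) \xrightarrow{\alpha} H_0(P^0) \to H_0(V^{-1}) \to 0,
\]
and it remains to show $\alpha = 0$, equivalently that $V \subseteq JP^0$. Here I plan to invoke the standing hypothesis of this subsection that $V$ has no projective summands. Suppose toward a contradiction that $\alpha \neq 0$ and pick a degree $d$ together with $v \in V_d$ whose image $\bar v \in H_0(P^0)_d$ is nonzero. Decomposing $P^0 \cong \bigoplus_i \tC(d_i,-)^{n_i}$, one computes $H_0(P^0)_d \cong \tC(d,d)^{n_d} \cong \mk[S_d]^{n_d}$, which is semisimple because $\mathrm{char}\,\mk = 0$. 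The cyclic $\mk[S_d]$-submodule $U := \mk[S_d]\bar v$ therefore splits off, and tensoring over $\tC(d,d)$ yields a $\tC$-module direct-summand decomposition $P^0 = M \oplus P''$ with $M := \tC(d,-) \otimes_{\tC(d,d)} U$ a nonzero finitely generated projective $\FI$-module.

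Now form the composite $\phi : V \hookrightarrow P^0 \twoheadrightarrow M$. By construction $\phi(v) = \bar v$, so $\mk[S_d]$-equivariance of $\phi_d$ forces its image to contain $\mk[S_d]\bar v = U = M_d$; since $M$ is generated in its bottom degree $d$ as a $\tC$-module, $\phi$ is surjective. Projectivity of $M$ then splits $\phi$, producing a nonzero projective direct summand of $V$ and contradicting the no-projective-summands hypothesis. Hence $\alpha = 0$, giving $H_0(V) \cong H_1(V^{-1})$; applying $\td(-)$ to each of the isomorphisms $H_s(V) \cong H_{s+1}(V^{-1})$ then yields $\hd_s(V) = \hd_{s+1}(V^{-1})$. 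The main obstacle is this splitting-off construction in the second paragraph: it depends essentially on the characteristic-zero assumption, both for the semisimple decomposition of $\tC(d,d)^{n_d}$ and, upstream, to produce the embedding $V \hookrightarrow P^0$ into a projective module in the first place.
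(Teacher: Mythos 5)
Your proposal is correct and follows essentially the same route as the paper, which is terser: after reducing to the exact sequence $0 \to V \to P^0 \to V^{-1} \to 0$, the paper simply asserts that $\Omega V^{-1} \cong V$ "since $V$ has been supposed to have no projective summands," and then cites Lemma~\ref{ini}. Your write-up is a detailed expansion of precisely those two steps: you rederive the dimension shift that Lemma~\ref{ini} encapsulates, and you supply an explicit argument that the map $\alpha\colon H_0(V)\to H_0(P^0)$ vanishes (equivalently, that $V\subseteq JP^0$, i.e.\ that $P^0\to V^{-1}$ is a projective cover). One small remark: your splitting-off construction invokes semisimplicity of $\mk[S_d]$ to split the cyclic submodule $\mk[S_d]\bar v$ off from $H_0(P^0)_d$. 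This is correct in characteristic $0$ but is stronger than needed for this particular step — the standard lifting argument (if $P^0 = Q\oplus Q'$ with $Q$ still surjecting onto $V^{-1}$, lift $Q'\to V^{-1}$ through $Q$ to embed $Q'$ as a direct summand of $V$) works whenever projective covers exist, i.e.\ whenever $\tC(d,d)$ is semiperfect, which holds for any field. The genuine place where characteristic~$0$ is indispensable is, as you correctly note, in guaranteeing the existence of the embedding $V\hookrightarrow P^0$ into a projective (= injective) module in the first place.
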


\begin{proof}
Since $V$ is torsionless, we know that $V_T = C' = 0$, $V_F \cong V$, and $V^{-1} \cong C''$ in the above diagram. Furthermore, $\Omega C'' \cong V$ since $V$ has been supposed to have no projective summands. Now the conclusion follows from Lemma \ref{ini}.
\end{proof}

The following result gives us an upper bound for homological degrees of torsionless modules.

\begin{lemma} \label{hd of torsionless modules}
Let $V$ be a torsionless $\tC$-module. Then for $s \geqslant 1$,
\begin{equation*}
\hd_s (V) \leqslant 2\gd(V) + s - 1.
\end{equation*}
\end{lemma}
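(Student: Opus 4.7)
The plan is to reduce the bound to an application of the Church--Ellenberg inequality on a related module whose generating degree is strictly smaller than $\gd(V)$. First I would handle the edge cases: if $V = 0$ the statement is vacuous, and if $V = V' \oplus P$ with $P$ projective, then $\hd_s(V) = \hd_s(V')$ for $s \geq 1$ and $\gd(V') \leq \gd(V)$, so it is enough to treat $V'$. I may therefore assume $V$ is nonzero and has no projective summands.

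Because $V$ is torsionless, $V_T = 0$, and the construction recalled in diagram (\ref{construct injective resolution}) collapses to a single short exact sequence
\[ 0 \to V \to P^0 \to V^{-1} \to 0, \]
where $P^0$ is a finitely generated projective (hence injective) $\FI$-module and $\gd(P^0) < \gd(V) = d$ by \cite[Proposition 7.5]{GL3}. Plugging this sequence into the long exact sequence of $\tC_0 \otimes_{\tC} -$-homology and using $H_t(P^0) = 0$ for $t \geq 1$ produces two pieces of information: the isomorphism $H_{s+1}(V^{-1}) \cong H_s(V)$ for all $s \geq 1$ (already recorded in Lemma \ref{hd of cokernel}), and the injection $H_1(V^{-1}) \hookrightarrow H_0(V)$. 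Combined with the obvious $\gd(V^{-1}) \leq \gd(P^0)$, these give the two quantitative inputs $\gd(V^{-1}) \leq d - 1$ and $\hd_1(V^{-1}) \leq d$.

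Now I would apply the Church--Ellenberg upper bound $\hd_t(W) \leq \gd(W) + \hd_1(W) + t - 1$ from \cite[Theorem A]{CE} to $W = V^{-1}$ at index $t = s + 1$: for every $s \geq 1$,
\[ \hd_s(V) \;=\; \hd_{s+1}(V^{-1}) \;\leq\; \gd(V^{-1}) + \hd_1(V^{-1}) + s \;\leq\; (d-1) + d + s \;=\; 2\gd(V) + s - 1, \]
which is exactly the desired inequality.

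The only delicate step in this plan is the strict inequality $\gd(P^0) < \gd(V)$ for the chosen embedding; this depends on having first removed all projective summands of $V$ and on the identification of finitely generated projective and injective $\FI$-modules in characteristic zero, which is precisely what \cite[Proposition 7.5]{GL3} supplies. Everything else is long-exact-sequence bookkeeping together with a single invocation of the Church--Ellenberg theorem; in particular, the quadratic-looking shape $2\gd(V) + s - 1$ of the bound emerges automatically from adding $\gd(V^{-1}) \leq d - 1$ and $\hd_1(V^{-1}) \leq d$ inside the linear Church--Ellenberg estimate.
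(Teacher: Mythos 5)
Your proposal is correct and takes essentially the same route as the paper: reduce to the projective-summand-free case, use the collapse of the torsion/torsionless construction to the single sequence $0 \to V \to P^0 \to V^{-1} \to 0$, and then apply the Church--Ellenberg bound to $V^{-1}$ at shifted index. The only (harmless) variation is that you obtain $\hd_1(V^{-1}) \leqslant \gd(V)$ directly from the injection $H_1(V^{-1}) \hookrightarrow H_0(V)$ in the long exact sequence, whereas the paper records the equality $\hd_1(V^{-1}) = \gd(V)$ via the projective-cover identification $\Omega V^{-1} \cong V$ in Lemma \ref{hd of cokernel}; both give exactly the input needed.
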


\begin{proof}
The proof relies on \cite[Theorem A]{CE}. One may assume that $V$ has no projective summands. By Lemma \ref{hd of cokernel}, one has
\begin{equation*}
\hd_s(V) = \hd_{s+1} (V^{-1}) \leqslant \gd(V^{-1}) + \hd_1(V^{-1}) + s
\end{equation*}
by \cite[Theorem A]{CE}. Howover, from Diagram (\ref{construct injective resolution}), one has
\begin{equation*}
\hd_1(V^{-1}) = \gd(V)
\end{equation*}
and
\begin{equation*}
\gd(V^{-1}) = \gd(P^0) \leqslant \gd(V) - 1.
\end{equation*}
Consequently, we have
\begin{equation*}
\hd_s(V) \leqslant \gd(V^{-1}) + \hd_1(V^{-1}) + s \leqslant 2\gd(V) + s - 1
\end{equation*}
as claimed.
\end{proof}

Now we are ready to prove the main result of this section.

\begin{theorem}
Let $\mk$ be a field of characteristic 0 and $\tC$ be the $\mk$-linearization of $\FI$. Let $V$ be a finitely generated $\tC$-module. Then for $s \geqslant 1$, we have
\begin{equation*}
\hd_s (V) \leqslant \max \{ \td(V), \, 2\gd(V) - 1 \} + s.
\end{equation*}
\end{theorem}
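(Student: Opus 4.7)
The plan is to reduce to the torsion--torsionless decomposition of $V$ and then combine two bounds we already have: the linear bound for torsion modules (Corollary \ref{hd of torsion modules of combinatorial categories}) and the linear bound for torsionless modules (Lemma \ref{hd of torsionless modules}).

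First, I would take the canonical short exact sequence
\begin{equation*}
0 \to V_T \to V \to V_F \to 0
\end{equation*}
introduced earlier, where $V_T$ is the (finite dimensional) torsion submodule of $V$ and $V_F$ is torsionless. Applying $\tC_0 \otimes_{\tC} -$ produces the long exact sequence of homologies
\begin{equation*}
\ldots \to H_s(V_T) \to H_s(V) \to H_s(V_F) \to H_{s-1}(V_T) \to \ldots .
\end{equation*}
Splitting the three-term piece $H_s(V_T) \to H_s(V) \to H_s(V_F)$ into a short exact sequence whose left term is a quotient of $H_s(V_T)$ and whose right term is a submodule of $H_s(V_F)$, and then applying Lemma \ref{compare td} (together with the obvious fact that passing to quotients or submodules cannot increase torsion degree for discrete torsion modules), one obtains
\begin{equation*}
\hd_s(V) = \td(H_s(V)) \leqslant \max\{ \hd_s(V_T), \, \hd_s(V_F) \}.
\end{equation*}

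Next, I would estimate each of the two terms on the right. Since $V_T$ is a finitely generated torsion module, Corollary \ref{hd of torsion modules of combinatorial categories} gives
\begin{equation*}
\hd_s(V_T) \leqslant \td(V_T) + s \leqslant \td(V) + s,
\end{equation*}
using Lemma \ref{compare td} applied to the inclusion $V_T \hookrightarrow V$. Since $V_F$ is torsionless and finitely generated (it is a quotient of $V$, and finite generation is preserved under quotients; alternatively the category is locally Noetherian), Lemma \ref{hd of torsionless modules} gives
\begin{equation*}
\hd_s(V_F) \leqslant 2\gd(V_F) + s - 1 \leqslant 2\gd(V) + s - 1
\end{equation*}
for $s \geqslant 1$, where the last step uses Lemma \ref{compare gd} applied to the quotient $V \twoheadrightarrow V_F$. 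Combining yields
\begin{equation*}
\hd_s(V) \leqslant \max\{ \td(V) + s, \, 2\gd(V) + s - 1 \} = \max\{ \td(V), \, 2\gd(V) - 1 \} + s,
\end{equation*}
which is exactly the asserted bound.

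The only subtle point, and what I would consider the main obstacle, is the first step: controlling the torsion degree of the middle term of a three-term exact sequence of discrete torsion modules. This is essentially an extension of Lemma \ref{compare td}; it works because each $H_s(W)$ is discrete (Remark \ref{discrete structure of homologies}), so $\td$ reduces to the largest object on which the module is supported, and the support of a quotient sits inside that of the ambient module. Edge cases ($V_T = 0$ or $V_F = 0$) are automatic: if $V$ is torsionless, $\td(V) = -\infty$ and the bound reduces to Lemma \ref{hd of torsionless modules}; if $V$ is torsion, $\gd(V) \leqslant \td(V)$ so $2\gd(V) - 1 \leqslant \td(V) + \gd(V) - 1$ may or may not dominate, but the torsion bound of Corollary \ref{hd of torsion modules of combinatorial categories} suffices.
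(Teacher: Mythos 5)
Your proof is correct and follows the same route as the paper: the torsion--torsionless short exact sequence $0 \to V_T \to V \to V_F \to 0$, the induced long exact sequence in homology giving $\hd_s(V) \leqslant \max\{\hd_s(V_T), \hd_s(V_F)\}$, and then the torsion bound (Corollary \ref{hd of torsion modules of combinatorial categories}) for $V_T$ together with the torsionless bound (Lemma \ref{hd of torsionless modules}) for $V_F$. The only difference is that you spell out in more detail why the three-term exact sequence controls the middle torsion degree, a step the paper leaves implicit; your justification via discreteness of the homology modules is the right one.
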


\begin{proof}
The short exact sequence
\begin{equation*}
0 \to V_T \to V \to V_F \to 0
\end{equation*}
induces a long exact sequence
\begin{equation*}
\ldots \to H_s(V_T) \to H_s(V) \to H_s(V_F) \to \ldots.
\end{equation*}
We deduce that
\begin{equation*}
\hd_s(V) \leqslant \max \{ \hd_s(V_T), \, \hd_s(V_F) \}.
\end{equation*}
Note that
\begin{equation*}
\hd_s(V_T) \leqslant \td(V_T) + s = \td(V) + s
\end{equation*}
by Corollary \ref{hd of torsion modules of combinatorial categories} and
\begin{equation*}
\hd_s(V_F) \leqslant 2\gd(V_F) + s - 1 \leqslant 2\gd(V) + s - 1
\end{equation*}
by the previous lemma. The conclusion follows.
\end{proof}

\begin{remark} \normalfont
For torsionless modules, the upper bound provided in this theorem is always optimal than that of \cite[Theorem A]{CE}. Indeed, if $V$ is torsionless, we have
\begin{equation*}
\hd_s (V) \leqslant 2\gd(V) + s - 1 < \gd(V) + \hd_1(V) + s - 1
\end{equation*}
by Lemma \ref{compare gd to hd}.
\end{remark}

Here is an example:

\begin{example} \label{example} \normalfont
The projective $\tC$-module $\tC(1, -)$ has the following structure:
\begin{equation*}
\xymatrix{
 0 \ar[r] & \mk_1 \ar[r] \ar[dr] & \mk_2 \ar[r] \ar[dr] & \mk_3 \ar[r] \ar[dr] & \mk_4 \ar[r] \ar[dr] & \ldots \\
 & & \epsilon_2 \ar[r] & \epsilon_3 \ar[r] & \epsilon_4 \ar[r] & \ldots
}
\end{equation*}
where $\mk_i$ and $\epsilon_i$ are the trivial representation and standard representation of symmetric groups with $i$ letters. Let $V$ be the submodule
\begin{equation*}
\xymatrix{
 0 \ar[r] & 0 \ar[r] \ar[dr] & \mk_2 \ar[r] \ar[dr] & \mk_3 \ar[r] \ar[dr] & \mk_4 \ar[r] \ar[dr] & \ldots \\
 & & 0 \ar[r] & \epsilon_3 \ar[r] & \epsilon_4 \ar[r] & \ldots
}
\end{equation*}
and $\overline{V}$ be the quotient $\tC(1,-) / V$. A direct computation shows that $\gd(V) = 2$ and $\hd_1(V) = 4$. Therefore, by \cite[Theorem A]{CE}, one should have
\begin{equation*}
\hd_s(V) \leqslant 2 + 4 + s - 1 = s + 5
\end{equation*}
for $s \geqslant 1$. However, the above theorem tells us that
\begin{equation*}
\hd_s(V) \leqslant 4 + s - 1 = s + 3.
\end{equation*}
We also have $\td(\overline{V}) = 2 = \hd_1 (\overline{V})$. Therefore, the upper bounds described in Theorem \ref{hd of torsion modules} are not sharp.
\end{example}


\begin{thebibliography}{99}

\bibitem{BGS} A. Beilinson, V. Ginzburg, W. Soergel, \textit{Koszul duality patterns in representation theory}, J. Amer. Math. Soc., 9 (1996), 473-527.
\bibitem{CE} T. Church, J. Ellenberg, \textit{Homology of FI-modules}, arXiv:1506.01022.
\bibitem{CF} T. Church, B. Farb, \textit{Representation theory and homological stability}, Adv. Math. 245 (2013), 250-314, arXiv:1008.1368.
\bibitem{CEF} T. Church, J. Ellenberg, B. Farb, \textit{FI-modules and stability for representations of symmetric groups}, Duke Math. J. 164 (2015) 9, 1833-1910, arXiv:1204.4533.
\bibitem{CEFN}  T. Church, J. Ellenberg, B. Farb, R. Nagpal, \textit{$\mathrm{FI}$-modules over Noetherian rings}, Geom. Topol. 18-5 (2014) 2951-2984, arXiv:1210.1854.
\bibitem{ES} D. Eisenbud, \textit{Commutative algebra. With a view toward algebraic geometry}, GTM 150, Springer-Verlag, New York, 1995.
\bibitem{Farb}  B. Farb, \textit{Representation stability}, to appear in Proceedings of the ICM 2014, Seoul, arXiv:1404.4065.
\bibitem{FLS} V. Franjou, J. Lannes, and L. Schwartz, \textit{Autour de la cohomologie de MacLane des corps finis}, Invent. Math. 115 (1994), no. 3, 513-538.
\bibitem{GL1} W. L. Gan, L. Li, \textit{Noetherian property of infinite EI categories}, New York J. Math. 21 (2015), 369-382, arXiv:1407.8235.
\bibitem{GL2} W. L. Gan, L. Li, \textit{Koszulity of directed categories in representation stability theory}, arXiv:1411.5308.
\bibitem{GL3} W. L. Gan, L. Li, \textit{Coinduction functor in representation stability theory}, to appear in J. Lond. Math. Soc., arXiv:1502.06989.
\bibitem{GL4} W. L. Gan, L. Li, \textit{A remark on FI-module homology}, arXiv:1505.01777.
\bibitem{Li} L. Li, \textit{A generalized Koszul theory and its application}, Trans. Amer. Math. Soc. 366 (2014), 931-977, arXiv:1109.5760.
\bibitem{MOS}  V. Mazorchuk, S. Ovsienko, C. Stroppel, \textit{Quadratic duals, Koszul dual functors, and applications}, Trans. Amer. Math. Soc. 361 (2009), 1129-1172, arXiv:math/0603475.
\bibitem{N} G. Nagpal, \textit{FI-modules and the cohomology of modular representations of symmetric groups}, arXiv:1505.04294.
\bibitem{P} A. Putman, \textit{Stability in the homology of congruence subgroups}, to appear in Invent. Math., arXiv:1201.4876.
\bibitem{PS} A. Putman, S. Sam, \textit{Representation stability and finite linear groups}, arXiv:1408.3694.
\bibitem{SS1} S. Sam, A. Snowden, \textit{GL-equivariant modules over polynomial rings in infinitely many variables}, to appear in Trans. Amer. Math. Soc., arXiv:1206.2233.
\bibitem{SS2} S. Sam, A. Snowden, \textit{Gr\"{o}bner methods for representations of combinatorial categories}, arXiv:1409.1670.
\bibitem{Wilson} J. Wilson, \textit{$\FI_{\mathcal{W}}$-modules and stability criteria for representations of classical Weyl groups}, J. Algebra 420 (2014), 269-332, arXiv:1309.3817.
\end{thebibliography}
\end{document}